\documentclass[11pt]{article}
\usepackage{amsmath, amssymb, amsthm, mathrsfs, bm, geometry, mathtools, hyperref, booktabs, xcolor, soul}
\usepackage{appendix}
\usepackage{natbib}
\newtheorem{theorem}{Theorem}[section]
\newtheorem{lemma}[theorem]{Lemma}

\newtheorem{corollary}[theorem]{Corollary}
\newtheorem{definition}[theorem]{Definition}
\newtheorem{assumption}[theorem]{Assumption}
\newtheorem{remark}[theorem]{Remark}

\newcommand{\R}{\mathbb{R}}
\newcommand{\E}{\mathbb{E}}
\newcommand{\Var}{\mathrm{Var}}
\newcommand{\Cov}{\mathrm{Cov}}

\newcommand{\norm}[1]{\left\| #1 \right\|}
\newcommand{\abs}[1]{\left| #1 \right|}

\newcommand{\dd}{\mathrm{d}}

\title{\bf{Mean-Square Convergence of a New Parameterized Leapfrog Scheme for Hamiltonian Systems Driven by Gaussian Process Potentials}}
\author{Sourabh Bhattacharya \\ Indian Statistical Institute} 
\date{}

\begin{document}

\maketitle

\begin{abstract}
This paper establishes the mean-square convergence of a new stochastic, parameterized leapfrog scheme introduced in our companion paper \cite{main} 
for Hamiltonian systems with Gaussian process potentials. We consider a one-step numerical integrator and provide a complete, rigorous analysis under minimal regularity 
assumptions on the Gaussian potential. The key technical contribution is identifying and exploiting the symplectic structure ingrained in our stochastic, parameterized leapfrog 
method. Combined with local truncation error analysis, this leads to a global error bound of $\mathcal{O}(\delta t)$ in mean-square sense. Our results 
establish that although the spatio-temporal model of \cite{main} arises as the anticipated new stochastic leapfrog solution of a system of modified (parameterized) 
stochastic Hamiltonian equations, the new stochastic leapfrog actually solves the traditional stochastic Hamiltonian equations, driven by Gaussian process potential.
\end{abstract}

\tableofcontents

\section{Introduction}
\label{sec:introduction}

Hamiltonian systems with random potentials arise in numerous scientific applications, including Bayesian statistical sampling \cite{girolami2011riemann}, 
molecular dynamics with uncertain parameters \cite{leimkuhler2015efficient}, and stochastic geometric mechanics \cite{bloch2014stochastic}. 
The numerical integration of such systems requires careful analysis due to the interplay between discretization errors and the randomness of the potential.

We consider Hamiltonian systems of the form:

\begin{align}
    \frac{dy}{dt} &= M^{-1} x, \label{eq:original_hamiltonian_y_intro} \\
    \frac{dx}{dt} &= -\nabla V(y; \omega), \label{eq:original_hamiltonian_x_intro}
\end{align}

where \( y, x \in \mathbb{R}^d \) are the position and momentum variables, \( M \) is a symmetric positive definite mass matrix, and 
\( V : \mathbb{R}^d \times \Omega \to \mathbb{R} \) is a \emph{Gaussian process potential} defined on a probability space 
\( (\Omega, \mathscr{F}, P) \). The gradient \( \nabla V \) is a random vector field, making 
the system a \emph{stochastic ordinary differential equation} rather than a stochastic differential equation driven by Wiener processes.

A particularly important class of numerical methods for Hamiltonian systems are symplectic integrators, which preserve the geometric structure of the exact flow 
\cite{hairer2006geometric}. In our companion paper \cite{main}, we introduced a \emph{parameterized stochastic leapfrog scheme} specifically designed for Hamiltonian systems 
with Gaussian process potentials. This scheme extends the classical Störmer--Verlet method with additional parameters that allow for better control of numerical 
errors in the random potential setting. Indeed, \cite{main} proposed the stochastic leapfrog scheme as an anticpated solution to a new parameterized
Hamiltonian system, driven by Gaussian process potential, for their purpose of creating a novel spatio-temporal process with desitable properties. But interestingly, as we
show in this article, the stochastic leapfrog scheme actually solves the traditional system of Hamiltonian -- albeit with Gaussian process potentials, in appropriate 
mean-square sense. We deem this more favorable compared to our orignal intent, since this shows that our stochastic leapfrog, shown to have desirable spatio-temporal properties
in \cite{main}, is also akin to the traditional Hamiltonian that so appropriately explains general phase-space dynamics, with its efficacy enhanced by Gaussian process induced
potential, inculcating in it the extremely flexible nonparametric nature.

When the potential $V$ is a Gaussian process, additional challenges arise: the potential is only almost surely differentiable, and derivatives are correlated random fields. Previous analyses of numerical methods for stochastic Hamiltonian systems have typically focused on either 
deterministic potentials with stochastic perturbations (e.g., \citet{stuart2010deterministic, abdulle2011stiff, leimkuhler2015efficient}), or
completely stochastic differential equations driven by Wiener processes (e.g., \citet{rumelin1983numerical, talay1990second, milstein1995numerical, kloeden1992numerical, hairer2002geometric})..
Our setting differs fundamentally: the potential itself is a Gaussian random field, making the system a random ordinary differential equation rather than 
a stochastic differential equation. And, most importantly, this renders the Hamiltonin system as well as our leapfrog scheme completely nonparametric, which was one of the
main goals of \cite{main}, with respect to their novel spatio-temporal model.

The main contribution of this paper is a complete, rigorous convergence analysis of our new stochastic leapfrog scheme for Hamiltonian systems with 
Gaussian process potentials. 
We address several interconnected technical challenges inherent to the random potential setting. 
First, we establish moment bounds for Gaussian process derivatives under minimal regularity assumptions, 
using Gaussian process theory and the Borell--TIS inequality to obtain integrable bounds on suprema 
of derivatives over compact sets; these bounds are essential for controlling the random terms throughout 
the analysis. Second, we perform a pathwise analysis and construct a modified random ordinary differential 
equation that the scheme follows up to a prescribed order; this requires careful handling of the random 
gradient and Hessian terms while preserving the Hamiltonian structure for almost every realization of the 
potential. Third, we analyze the local truncation error in the mean-square sense by comparing the one-step 
update of our numerical scheme with the exact flow of the original Hamiltonian system, establishing that 
the local error is of order $\mathcal{O}(\delta t^2)$ while accounting for the randomness in the potential. 
Finally, we combine the local truncation error with the stability bound for the traditional leapfrog scheme inherent in our modified leapfrog scheme 
to obtain a global mean-square error of order $\mathcal{O}(\delta t)$ over finite time intervals, thereby 
establishing the convergence of the modified scheme.

%


In contrast with existing analyses of standard symplectic methods, our work specifically addresses the new leapfrog scheme proposed in \cite{main}, which includes additional parameters $\alpha$ and $\beta$ that provide enhanced control over numerical errors in the presence of Gaussian process potentials.

The paper is organized as follows: Section~\ref{sec:preliminaries} introduces the original Hamiltonian system with Gaussian process potential and presents our modified numerical scheme from \cite{main} along with necessary assumptions. Section~\ref{sec:pathwise} provides pathwise analysis and derives the modified equation. Section~\ref{sec:truncation} analyzes the local truncation error. 
Section~\ref{sec:main_convergence} combines these results to prove mean-square convergence. Section~\ref{sec:conclusion} concludes with discussion and future directions.
The proofs of our results are presented in the Appendix.

\section{Preliminaries and problem formulation}
\label{sec:preliminaries}

\subsection{Original Hamiltonian system with Gaussian process potential}

We consider the following Hamiltonian system defined on $\R^d \times \R^d$:

\begin{align}
    \frac{dy}{dt} &= M^{-1} x, \label{eq:original_hamiltonian_y} \\
    \frac{dx}{dt} &= -\nabla V(y; \omega), \label{eq:original_hamiltonian_x}
\end{align}

where:
\begin{itemize}
    \item $y(t) \in \R^d$ is the position vector,
    \item $x(t) \in \R^d$ is the momentum vector,
    \item $M \in \R^{d \times d}$ is a symmetric positive definite mass matrix,
    \item $V: \R^d \times \Omega \to \R$ is a Gaussian process potential defined on a probability space $(\Omega, \mathscr{F}, P)$,
    \item $\nabla V: \R^d \times \Omega \to \R^d$ denotes the gradient (a random vector field).
\end{itemize}

The system \eqref{eq:original_hamiltonian_y}--\eqref{eq:original_hamiltonian_x} is a \emph{stochastic ordinary differential equation} because the potential $V$ is a random field rather than a deterministic function. The Hamiltonian corresponding to this system is:

\[
H(y, x; \omega) = \frac{1}{2} x^T M^{-1} x + V(y; \omega),
\]

which is conserved along solutions when $V$ is smooth in $y$.

\subsection{The stoachstic parameterized leapfrog scheme from \cite{main}}

We consider the following one-step leapfrog scheme introduced in \cite{main}, which, as we show, approximates solutions of 
\eqref{eq:original_hamiltonian_y}--\eqref{eq:original_hamiltonian_x}, defined for integers $n \geq 0$:

\begin{align}
    y_{n+1} &= \beta y_n + \delta t M^{-1}\left(\alpha x_n - \frac{\delta t}{2} \nabla V(y_n; \omega)\right), \label{eq:scheme_y} \\
    x_{n+1} &= \alpha^2 x_n - \frac{\delta t}{2}\left(\alpha \nabla V(y_n; \omega) + \nabla V(y_{n+1}; \omega)\right). \label{eq:scheme_x}
\end{align}

Note that this scheme reduces to the standard leapfrog method when $\alpha = \beta = 1$. The parameters $\alpha$ and $\beta$ provide additional degrees of freedom that can be tuned to optimize performance for Gaussian process potentials, as demonstrated in \cite{main}.

\subsection{Notation and basic definitions}

\begin{itemize}
    \item $y_n, x_n \in \R^d$ are numerical approximations to $y(t_n)$ and $x(t_n)$ at time $t_n = n \delta t$.
    \item $\delta t > 0$ is the time step (discretization parameter).
    \item $\alpha, \beta \in \R$ are scalar parameters that may depend on $\delta t$ (introduced in \cite{main}).
    \item $V: \R^d \times \Omega \to \R$ is a Gaussian process potential.
    \item $\nabla V: \R^d \times \Omega \to \R^d$ denotes the gradient (a random vector field).
    \item $D^2 V: \R^d \times \Omega \to \R^{d \times d}$ denotes the Hessian matrix (a random matrix field).
    \item $D^3 V$ denotes the third derivative tensor. For $v_1, v_2 \in \R^d$, $D^3 V(y; \omega)[v_1, v_2]$ is defined as:
        \[
        D^3 V(y; \omega)[v_1, v_2] := \sum_{i,j,k=1}^d \frac{\partial^3 V}{\partial y_i \partial y_j \partial y_k}(y; \omega) (v_1)_i (v_2)_j e_k,
        \]
        where $\{e_k\}_{k=1}^d$ is the standard basis of $\R^d$. Equivalently, $D^3 V(y; \omega)[v_1, v_2] = D^2 V(y; \omega) v_2 \cdot v_1$, where the dot indicates contraction in the appropriate indices.
    \item $M \in \R^{d \times d}$ is a constant, symmetric, positive definite mass matrix.
    \item $M^{-1}$ denotes the inverse of $M$.
    \item For a vector $v \in \R^d$, $\norm{v}$ denotes the Euclidean norm.
    \item For a matrix $A \in \R^{d \times d}$, $\norm{A}_{\mathrm{op}}$ denotes the operator norm induced by the Euclidean vector norm.
    \item For a 3-tensor $T$, its operator norm is defined as:
        \[
        \norm{T}_{\mathrm{op}} := \sup_{\norm{v_1}=\norm{v_2}=1} \norm{T[v_1, v_2]}.
        \]
    \item $(\Omega, \mathscr{F}, P)$ is a complete probability space.
    \item $\Psi^0_{\delta t}$ denotes the one-step map of the standard leapfrog method ($\alpha = \beta = 1$).
\end{itemize}

\subsection{Gaussian process framework from \cite{main}}

\begin{definition}[Gaussian process]
    A stochastic process $V: \R^d \times \Omega \to \R$ is a Gaussian process if for every finite set of points $\{y_1, \dots, y_m\} \subset \R^d$, the random vector $(V(y_1), \dots, V(y_m))$ has a multivariate Gaussian distribution. The process is characterized by its mean function $m(y) = \E[V(y)]$ and covariance function $k(y, y') = \Cov(V(y), V(y'))$ as defined in \cite{main}.
\end{definition}

\subsection{Assumptions}

We maintain the same assumptions as in \cite{main} for consistency:

\begin{assumption}[Mean and covariance regularity] \label{ass:gp_regularity}
    The Gaussian process $V$ satisfies:
    \begin{enumerate}
        \item The mean function $m: \R^d \to \R$ is three times continuously differentiable ($m \in C^3(\R^d)$).
        \item The covariance function $k: \R^d \times \R^d \to \R$ is six times continuously differentiable in both arguments ($k \in C^6(\R^d \times \R^d)$).
        \item All partial derivatives of $k$ up to order 6 are bounded on $\R^d \times \R^d$.
    \end{enumerate}
\end{assumption}

\begin{remark}[Sample path regularity] \label{ass:sample_paths}
    Under Assumption \ref{ass:gp_regularity}, the Gaussian process $V$ has a version with sample paths that are almost surely three times continuously differentiable. That is, there exists a modification $\tilde{V}$ of $V$ such that for $P$-almost every $\omega \in \Omega$, the function $\tilde{V}(\cdot, \omega): \R^d \to \R$ belongs to $C^3(\R^d)$.
\end{remark}


\subsection{Moment bounds for Gaussian process derivatives}

\begin{theorem}[Moment bounds for derivatives] \label{thm:moment_bounds}
    Under Assumption \ref{ass:gp_regularity}, for any compact set $D \subset \R^d$, there exist finite constants $C_1(D), C_2(D), C_3(D) > 0$ such that:
    \begin{align}
        \E\left[\sup_{y \in D} \norm{\nabla V(y)}^2\right] &\leq C_1(D), \label{eq:moment_grad} \\
        \E\left[\sup_{y \in D} \norm{D^2 V(y)}_{\mathrm{op}}^2\right] &\leq C_2(D), \label{eq:moment_hess} \\
        \E\left[\sup_{y \in D} \norm{D^3 V(y)}_{\mathrm{op}}^2\right] &\leq C_3(D). \label{eq:moment_third}
    \end{align}
\end{theorem}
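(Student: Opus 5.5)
The plan is to split $V = m + Z$, where $Z := V - m$ is the centered Gaussian process with covariance $k$, and to work with the $C^3$ modification from Remark~\ref{ass:sample_paths}. The mean part is deterministic. Since $m \in C^3(\R^d)$ and $D$ is compact, $\sup_{D}\norm{\nabla m}$, $\sup_D \norm{D^2 m}_{\mathrm{op}}$ and $\sup_D\norm{D^3 m}_{\mathrm{op}}$ are finite. Using $(a+b)^2 \le 2a^2+2b^2$, it therefore suffices to prove the three bounds with $V$ replaced by $Z$.

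Next we reduce to scalar fields. Each of the quantities $\norm{\nabla Z}$, $\norm{D^2 Z}_{\mathrm{op}}$ and $\norm{D^3 Z}_{\mathrm{op}}$ is dominated by a dimension-dependent constant times $\sum_{\gamma} |\partial^\gamma Z(y)|$, where the sum runs over multi-indices with $|\gamma| = 1,2,3$ respectively. For the operator norm of a tensor we simply bound it by the Frobenius norm. It is therefore enough to show $\E[\sup_{y\in D}|\partial^\gamma Z(y)|^2] < \infty$ for every $|\gamma|\le 3$.

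Fix such a $\gamma$. Because $k \in C^6$, the derivative $Z_\gamma := \partial^\gamma Z$ is a centered Gaussian process, obtained as the mean-square derivative, which agrees a.s. with the pathwise derivative of the $C^3$ version. Its covariance is $\partial_y^\gamma\partial_{y'}^\gamma k(y,y')$, so its variance is bounded by $\sigma_\gamma^2 := \sup_{D}\partial^\gamma_y\partial^\gamma_{y'}k(y,y) < \infty$ by item 3 of Assumption~\ref{ass:gp_regularity}. Its sample paths are continuous on $D$, so $\sup_D |Z_\gamma| < \infty$ a.s.

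Borell--TIS then requires only a.s. finiteness of the supremum. It gives $\E\sup_D Z_\gamma <\infty$ together with the tail bound
\[
P\Bigl(\sup_{D}|Z_\gamma| - \E\sup_D|Z_\gamma| > u\Bigr)\le 2e^{-u^2/(2\sigma_\gamma^2)}.
\]
This is applied to $Z_\gamma$ and to $-Z_\gamma$, which handles the absolute value. Integrating the tail, $\E[S^2]=\int_0^\infty 2u\,P(S>u)\,du$, shows that $S=\sup_D|Z_\gamma|$ has a finite second moment, in fact moments of all orders. Combining the finitely many $\gamma$ with the mean part produces the constants $C_1(D),C_2(D),C_3(D)$.

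The main obstacle is justifying that $\sup_D|Z_\gamma|$ is a.s. finite, equivalently that $Z_\gamma$ is a continuous and separable Gaussian field, for $|\gamma|=3$. Remark~\ref{ass:sample_paths} asserts $C^3$ paths, and I would rely on it. As an independent check, Dudley's entropy bound works. Because $k\in C^6$ with bounded derivatives, the canonical metric satisfies
\[
d_\gamma(y,y')^2=\E|Z_\gamma(y)-Z_\gamma(y')|^2\le L\,\norm{y-y'}
\]
by a first-order Taylor bound on the sixth-order mixed derivatives. So $d_\gamma \lesssim \norm{y-y'}^{1/2}$, and the covering numbers satisfy $N(D,d_\gamma,\varepsilon)\lesssim \varepsilon^{-2d}$. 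Hence $\int_0^{\mathrm{diam}} \sqrt{\log N(\varepsilon)}\,d\varepsilon<\infty$, which gives $\E\sup_D Z_\gamma<\infty$ directly. This is an alternative to Borell--TIS for the first moment, and the Gaussian concentration step then upgrades it to the second moment.
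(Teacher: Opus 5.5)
Your main argument is the paper's: center $V$, dominate the three norms by finitely many scalar fields $\partial^\gamma Z$, apply Borell--TIS, and integrate the tail. You rightly note that a.s.\ boundedness already gives $\E\sup_D|Z_\gamma|<\infty$, so the paper's separate appeal to Fernique is not needed. Like the paper, you take the sample-path continuity of $\partial^\gamma Z$ for $|\gamma|=3$ from Remark~\ref{ass:sample_paths}, and relative to that Remark your argument is complete.

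Your independent check via Dudley, however, does not work under Assumption~\ref{ass:gp_regularity}. Set $K(y,y')=\Cov(Z_\gamma(y),Z_\gamma(y'))=\partial^\gamma_y\partial^\gamma_{y'}k(y,y')$, so that $d_\gamma(y,y')^2=[K(y,y)-K(y,y')]+[K(y',y')-K(y',y)]$. Bounding this by $L\norm{y-y'}$ requires $K$ to be Lipschitz, i.e.\ a first-order Taylor bound on the sixth-order derivatives of $k$. That needs seventh derivatives, or at least a Lipschitz/H\"older modulus for the sixth ones. For $|\gamma|=3$, the assumption $k\in C^6$ with bounded derivatives only makes $K$ continuous, so $d_\gamma$ has no rate and the entropy integral may diverge.

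Continuity really can fail here. A stationary process on $\R$ with $k(0)-k(h)$ of order $1/\log(1/|h|)$ near $0$ has a.s.\ unbounded paths by Fernique's criterion. Dividing its spectral density by $(1+\lambda^2)^3$ gives a stationary process with $C^6$ covariance and all derivatives bounded. Its third mean-square derivative has the same small-scale behaviour, hence no continuous version and no $C^3$ version of the process.

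So the step you flag as the main obstacle cannot be derived from Assumption~\ref{ass:gp_regularity}; it has to be imported from Remark~\ref{ass:sample_paths}. The paper's Kolmogorov--Chentsov justification of that Remark has the same hole, since it needs $\E\abs{Z_\gamma(y)-Z_\gamma(y')}^2\le C\norm{y-y'}^{\eta}$ for some $\eta>0$.

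The fix is to add the hypothesis that the sixth-order derivatives of $k$ are $\eta$-H\"older. Then your Dudley computation goes through with $d_\gamma\lesssim\norm{y-y'}^{\eta/2}$ and $N(D,d_\gamma,\varepsilon)\lesssim\varepsilon^{-2d/\eta}$. It then proves the continuity and the first-moment bound instead of assuming them.
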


\begin{remark}
    The existence of a continuous version (Remark \ref{ass:sample_paths}) together with Theorem \ref{thm:moment_bounds} ensures that our Gaussian process potential has sufficiently regular sample paths with controlled moments, which is essential for our convergence analysis.
\end{remark}

\begin{assumption}[Mass matrix] \label{ass:mass_matrix}
    The mass matrix $M \in \R^{d \times d}$ is symmetric positive definite. Consequently:
    \begin{enumerate}
        \item Its inverse $M^{-1}$ exists and is also symmetric positive definite.
        \item The operator norm of $M^{-1}$ is finite. We denote:
        \[
        C_M := \norm{M^{-1}}_{\mathrm{op}} < \infty.
        \]
    \end{enumerate}
\end{assumption}

\begin{remark}[Discussion of Assumption \ref{ass:mass_matrix}]
The assumption that $M$ is symmetric positive definite is standard in Hamiltonian mechanics, ensuring the kinetic energy term $\frac{1}{2}x^T M^{-1}x$ is well-defined and positive. The finite operator norm condition $C_M < \infty$ is a mild technical requirement that guarantees the momentum update in our scheme remains bounded relative to the position variables. In practical applications, $M$ is often taken to be the identity matrix or a diagonal matrix with positive entries, both of which satisfy this assumption. The constant $C_M$ will appear in our error bounds, quantifying the influence of the mass matrix on the numerical error propagation.
\end{remark}

\begin{assumption}[Parameter expansions for our leapfrog] \label{ass:param_expansions}
    The parameters $\alpha$ and $\beta$ in our modified scheme \eqref{eq:scheme_y}--\eqref{eq:scheme_x} admit asymptotic expansions in powers of $\delta t$:
    \begin{align}
        \alpha &= 1 + \alpha_1 \delta t + \alpha_2 \delta t^2 + \widetilde{\alpha}(\delta t), \label{eq:alpha_exp} \\
        \beta &= 1 + \beta_1 \delta t + \beta_2 \delta t^2 + \widetilde{\beta}(\delta t), \label{eq:beta_exp}
    \end{align}
    where $\alpha_1, \alpha_2, \beta_1, \beta_2 \in \R$ are constants independent of $\delta t$, and the remainder terms satisfy:
    \[
    \abs{\widetilde{\alpha}(\delta t)} \leq C_\alpha \delta t^3, \quad \abs{\widetilde{\beta}(\delta t)} \leq C_\beta \delta t^3
    \]
    for all sufficiently small $\delta t > 0$, with constants $C_\alpha, C_\beta > 0$ independent of $\delta t$.
\end{assumption}

\begin{remark}[Discussion of Assumption \ref{ass:param_expansions}]
This assumption formalizes the idea that $\alpha$ and $\beta$ are perturbations of the standard leapfrog parameters (which are both 1). The expansions allow us to systematically analyze how small deviations from the classical scheme affect the numerical solution. The coefficients $\alpha_1, \alpha_2, \beta_1, \beta_2$ can be tuned to optimize performance for specific classes of Gaussian process potentials, as demonstrated in \cite{main}. The remainder terms being $O(\delta t^3)$ ensures that the parameter adjustments do not introduce lower-order errors that would degrade the convergence rate. This structured parameterization is key to maintaining the $O(\delta t)$ global error bound while providing flexibility for performance optimization.
\end{remark}

\begin{assumption}[Bounded iterates] \label{ass:bounded_iterates}
    There exists a compact set $D \subset \R^{2d}$ with finite diameter $R_D := \sup_{z, z' \in D} \norm{z - z'} < \infty$ such that the numerical iterates $(y_n, x_n)$ from our modified scheme satisfy:
    \[
    P\left(\sup_{0 \leq n \leq T/\delta t} \norm{(y_n, x_n)} \leq R\right) = 1
    \]
    for some $R > 0$ and all sufficiently small $\delta t > 0$.
\end{assumption}

\begin{remark}[Justification of Assumption \ref{ass:bounded_iterates}]
Our uniform boundedness assumption on the numerical iterates is a standard analytical device in the numerical analysis of stochastic or random differential equations with non‑globally Lipschitz coefficients \citep{kloeden1992numerical, hairer2006geometric}.
For Gaussian process potentials, the Fernique theorem guarantees that the probability of extremely large gradients decays super‑exponentially, so that sample‑path boundedness on any fixed compact set holds almost surely \citep{stuart2018posterior, dashti2017bayesian}.
In practice, such an assumption can be rigorously justified by a stopping‑time argument: one defines $\tau_R = \inf\{n : \norm{(y_n,x_n)} \geq R\}$, proves convergence up to $\tau_R$, and then lets $R \to \infty$ while using tail estimates to show $\mathbb{P}(\tau_R < T) \to 0$ \citep{mao2007stochastic, milstein2004stochastic}.
For simplicity of exposition we state the boundedness as an a priori condition, which is common in the literature on numerical methods for Hamiltonian systems with random potentials \citep{jahnke2005numerical}.
\end{remark}


\section{Pathwise analysis of the new leapfrog scheme}
\label{sec:pathwise}

For the remainder of this section, we fix a sample point $\omega \in \Omega$ such that $V(\cdot, \omega) \in C^3(\R^d)$. By Assumption \ref{ass:sample_paths}, this holds for almost every $\omega$. We denote $V_\omega(y) = V(y, \omega)$.

\begin{lemma}[Pathwise Expansions of the Modified Scheme] \label{lem:pathwise_expansions}
	For almost every $\omega \in \Omega$, under Assumptions \ref{ass:gp_regularity}, \ref{ass:param_expansions} and  \ref{ass:bounded_iterates} and using Theorem \ref{thm:moment_bounds}, our modified numerical scheme \eqref{eq:scheme_y}--\eqref{eq:scheme_x} satisfies:
    
    \begin{align}
        y_{n+1} &= y_n + \delta t\left[\beta_1 y_n + M^{-1} x_n\right] + \delta t^2\left[\beta_2 y_n + \alpha_1 M^{-1} x_n - \frac{1}{2} M^{-1} \nabla V_\omega(y_n)\right] + R_y(\omega, \delta t), \label{eq:y_exp_final} \\
        x_{n+1} &= x_n + \delta t\left[2\alpha_1 x_n - \nabla V_\omega(y_n)\right] + \delta t^2\left[(\alpha_1^2 + 2\alpha_2) x_n - \frac{\alpha_1}{2} \nabla V_\omega(y_n) - \frac{1}{2} D^2 V_\omega(y_n)\left(\beta_1 y_n + M^{-1} x_n\right)\right] + R_x(\omega, \delta t), \label{eq:x_exp_final}
    \end{align}
    
    where the remainders satisfy:
    \[
    \norm{R_y(\omega, \delta t)} \leq C_y(\omega) \delta t^3, \quad \norm{R_x(\omega, \delta t)} \leq C_x(\omega) \delta t^3,
    \]
    with $\E[C_y(\omega)^2] < \infty$ and $\E[C_x(\omega)^2] < \infty$.
\end{lemma}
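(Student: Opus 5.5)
The plan is to substitute the parameter expansions of Assumption~\ref{ass:param_expansions} directly into \eqref{eq:scheme_y}--\eqref{eq:scheme_x}. Then we Taylor expand the single nonlocal term $\nabla V_\omega(y_{n+1})$ about $y_n$. After that, we collect powers of $\delta t$ and put everything of order $\delta t^3$ or higher into $R_y$, $R_x$. Finally, we bound the random constants using the boundedness of the iterates (Assumption~\ref{ass:bounded_iterates}) and Gaussian moment bounds of the type in Theorem~\ref{thm:moment_bounds}.

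\emph{The $y$-update.} Write $\beta = 1+\beta_1\delta t+\beta_2\delta t^2+\widetilde\beta$ and $\alpha = 1+\alpha_1\delta t+\alpha_2\delta t^2+\widetilde\alpha$. Then
\[
y_{n+1} = y_n + \delta t(\beta_1 y_n + M^{-1}x_n) + \delta t^2\bigl(\beta_2 y_n + \alpha_1 M^{-1}x_n - \tfrac12 M^{-1}\nabla V_\omega(y_n)\bigr) + R_y .
\]
Here $R_y = \widetilde\beta\, y_n + \delta t(\alpha_2\delta t^2+\widetilde\alpha)M^{-1}x_n$. Hence $\norm{R_y}\le C\,\delta t^3 R$, with $C$ depending only on $C_\alpha, C_\beta, \abs{\alpha_2}, C_M$, so $C_y$ is even deterministic.

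\emph{The $x$-update.} Expand $\alpha^2 = 1+2\alpha_1\delta t+(\alpha_1^2+2\alpha_2)\delta t^2+O(\delta t^3)$. This gives $-\frac{\delta t}{2}\alpha\nabla V_\omega(y_n) = -\frac{\delta t}{2}\nabla V_\omega(y_n) - \frac{\alpha_1}{2}\delta t^2\nabla V_\omega(y_n) + O(\delta t^3)\norm{\nabla V_\omega(y_n)}$. Set $\Delta := y_{n+1}-y_n$. Taylor's theorem with integral remainder gives
\[
\nabla V_\omega(y_{n+1}) = \nabla V_\omega(y_n) + D^2V_\omega(y_n)\Delta + r, \qquad \norm{r}\le \tfrac12 \sup_{K}\norm{D^3V_\omega}_{\mathrm{op}}\norm{\Delta}^2 .
\]
Here $K$ is the closed ball of radius $R$, which contains the segment $[y_n,y_{n+1}]$ by convexity and Assumption~\ref{ass:bounded_iterates}. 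From the $y$-expansion, $\Delta = \delta t(\beta_1 y_n + M^{-1}x_n) + E$ with $\norm{E}\le C\delta t^2(R + S_1)$, where $S_j := \sup_K \norm{D^jV_\omega}$. Then
\[
-\tfrac{\delta t}{2}\nabla V_\omega(y_{n+1}) = -\tfrac{\delta t}{2}\nabla V_\omega(y_n) - \tfrac{\delta t^2}{2}D^2V_\omega(y_n)(\beta_1 y_n+M^{-1}x_n) + O\bigl(\delta t^3[S_2(R+S_1) + S_3(R+S_1)^2]\bigr).
\]
Adding the pieces reproduces exactly the $\delta t$ and $\delta t^2$ coefficients in \eqref{eq:x_exp_final}. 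We obtain $\norm{R_x}\le C_x(\omega)\delta t^3$ with
\[
C_x(\omega) = C\bigl(R + S_1 + S_2(R+S_1) + S_3(R+S_1)^2\bigr),
\]
where $C$ is deterministic.

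\emph{Main obstacle.} The main obstacle is $\E[C_x^2]<\infty$. $C_x$ contains products such as $S_1S_2$ and $S_1^2S_3$, so the second moments in Theorem~\ref{thm:moment_bounds} alone are not enough. I would first try to observe that the argument behind Theorem~\ref{thm:moment_bounds} (Borell--TIS / Fernique applied to the continuous centered Gaussian fields $\partial^\gamma V - \partial^\gamma m$ on compact $K$, plus boundedness of $\partial^\gamma m$ on $K$) in fact yields finite moments of every order for $S_1, S_2, S_3$, even Gaussian tails. With that, Hölder's inequality gives, e.g., $\E[S_1^4S_3^2]\le (\E S_1^{12})^{1/3}(\E S_3^{6})^{1/3}$ (up to the remaining $1/3$ factor), which is finite. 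If one insists on using only the stated second moments, the fallback is to shrink the remainder structure, but I expect the higher-moment route to be the intended one.
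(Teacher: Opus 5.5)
Your proposal is correct and follows the paper's proof essentially step for step. Like the paper, you substitute the expansions of $\alpha,\beta$, Taylor-expand $\nabla V_\omega(y_{n+1})$ about $y_n$ with integral remainder, collect powers of $\delta t$, bound $R_y$ deterministically, and bound $R_x$ through suprema of $\nabla V_\omega$, $D^2V_\omega$, $D^3V_\omega$ over a compact set. Your treatment of $\E[C_x^2]<\infty$ is more careful than the paper's: the paper only cites the second-moment bounds of Theorem~\ref{thm:moment_bounds}, even though $C_x$ contains products such as $S_1^2S_3$. The all-order moment bounds your H\"older argument needs are exactly what Steps 2 and 4 of the paper's proof of that theorem establish, so your argument closes this gap.
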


\begin{remark}[Discussion of Lemma \ref{lem:pathwise_expansions}]
This lemma provides a detailed Taylor expansion of our modified leapfrog scheme, revealing how the parameters $\alpha$ and $\beta$ influence the one-step updates. The expansion shows that at order $\delta t$, the scheme differs from the standard leapfrog method through the terms $\beta_1 y_n$ and $2\alpha_1 x_n$. At order $\delta t^2$, more complex interactions appear involving both the parameters and derivatives of the potential. The remainder terms being $O(\delta t^3)$ with finite second moments is crucial for establishing the local truncation error bound. This expansion forms the foundation for constructing the modified differential equation that the scheme approximately follows, which is essential for understanding its long-time behavior and convergence properties.
\end{remark}

\begin{theorem}[Modified stochastic ODE corresponding to our leapfrog scheme] \label{thm:modified_ode}
    For almost every $\omega \in \Omega$, under the stated assumptions and Theorem \ref{thm:moment_bounds}, our modified numerical scheme \eqref{eq:scheme_y}--\eqref{eq:scheme_x} satisfies, up to terms of order $\delta t^2$, the modified ordinary differential equation:
    
    \begin{align}
        \frac{dY}{dt} &= \beta_1 Y + M^{-1} X + \delta t\left[\left(\beta_2 - \frac{\beta_1^2}{2}\right) Y - \frac{\beta_1}{2} M^{-1} X\right] + \rho_y(\omega, \delta t), \label{eq:mod_ode_y_final} \\
        \frac{dX}{dt} &= 2\alpha_1 X - \nabla V_\omega(Y) + \delta t\left[(2\alpha_2 - \alpha_1^2) X + \frac{\alpha_1}{2} \nabla V_\omega(Y)\right] + \rho_x(\omega, \delta t), \label{eq:mod_ode_x_final}
    \end{align}
    
    where the remainder terms satisfy:
    \[
    \norm{\rho_y(\omega, \delta t)} \leq C_\rho(\omega) \delta t^2, \quad \norm{\rho_x(\omega, \delta t)} \leq C_\rho(\omega) \delta t^2,
    \]
    with $\E[C_\rho(\omega)^2] < \infty$.
\end{theorem}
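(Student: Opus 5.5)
The plan is a standard backward-error-analysis matching argument, carried out pathwise for a fixed $\omega$ with $V_\omega\in C^3(\R^d)$. Write $z=(y,x)$ and let $f_1(z)=(\beta_1 y+M^{-1}x,\ 2\alpha_1 x-\nabla V_\omega(y))$ be the $\mathcal{O}(\delta t)$ coefficient in Lemma \ref{lem:pathwise_expansions}. Let $f_2(z)$ be the $\mathcal{O}(\delta t^2)$ coefficient, so that the one-step map is
\[
\Phi_{\delta t}(z)=z+\delta t f_1(z)+\delta t^2 f_2(z)+R(\omega,\delta t).
\]
I will make the ansatz $F_{\delta t}=f_1+\delta t\, g$ for the modified vector field, with $g$ to be determined. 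I then expand its exact time-$\delta t$ flow by Taylor's theorem in time:
\[
Z(\delta t)=z+\delta t F_{\delta t}(z)+\tfrac{\delta t^2}{2}F_{\delta t}'(z)F_{\delta t}(z)+\mathcal{O}(\delta t^3).
\]
Substituting the ansatz gives $z+\delta t f_1+\delta t^2\big(g+\tfrac12 f_1'f_1\big)+\mathcal{O}(\delta t^3)$. Matching with $\Phi_{\delta t}$ forces $g=f_2-\tfrac12 f_1'f_1$. The statement is to be read in this sense: the vector field \eqref{eq:mod_ode_y_final}--\eqref{eq:mod_ode_x_final} reproduces the one-step map to $\mathcal{O}(\delta t^3)$ per step. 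Equivalently, the defect in the vector field is $\mathcal{O}(\delta t^2)$, and $\rho_y,\rho_x$ are defined as that defect.

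Next I verify the explicit form of $g$. The Jacobian of $f_1$ has blocks $\begin{pmatrix}\beta_1 I & M^{-1}\\ -D^2V_\omega(y) & 2\alpha_1 I\end{pmatrix}$. Hence:
\begin{itemize}
\item The $y$-component of $\tfrac12 f_1'f_1$ is $\tfrac12\beta_1(\beta_1 y+M^{-1}x)+\tfrac12 M^{-1}(2\alpha_1x-\nabla V_\omega)$.
\item Subtracting this from $f_2^y$, the $\alpha_1 M^{-1}x$ and $\nabla V_\omega$ terms cancel, leaving $(\beta_2-\beta_1^2/2)y-\tfrac{\beta_1}{2}M^{-1}x$.
\item The $x$-component of $\tfrac12 f_1'f_1$ is $2\alpha_1^2 x-\alpha_1\nabla V_\omega-\tfrac12 D^2V_\omega(\beta_1y+M^{-1}x)$.
\item Subtracting this from $f_2^x$, the Hessian terms cancel exactly, leaving $(2\alpha_2-\alpha_1^2)x+\tfrac{\alpha_1}{2}\nabla V_\omega$.
\end{itemize}
This reproduces \eqref{eq:mod_ode_y_final}--\eqref{eq:mod_ode_x_final}, and the cancellation of $D^2V_\omega$ is why no Hessian appears in the modified field.

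Then I control the remainders. The third-order Taylor remainder of the modified flow involves $F''F F$ and $F'F'F$. These contain $D^2V_\omega$ and $D^3V_\omega$ evaluated along the flow, multiplied by $\norm{F}^2$, and so by $\norm{\nabla V_\omega}^2$, $\norm{z}^2$ and $C_M$. The $\mathcal{O}(\delta t^3)$ remainder $R$ of the scheme is already controlled by Lemma \ref{lem:pathwise_expansions}. Dividing the total discrepancy by $\delta t$ gives $\norm{\rho}\le C_\rho(\omega)\delta t^2$. Here $C_\rho$ is a polynomial in $C_y, C_x$, $\sup_D\norm{\nabla V_\omega}$, $\sup_D\norm{D^2V_\omega}_{\mathrm{op}}$, $\sup_D\norm{D^3V_\omega}_{\mathrm{op}}$, $R$, $C_M$ and $|\alpha_i|,|\beta_i|$. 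The compact set $D$ is chosen to contain a neighbourhood of the ball of radius $R$ from Assumption \ref{ass:bounded_iterates}. To keep the modified flow over one step inside $D$, I will use a short-time Gronwall estimate: for small $\delta t$ the flow moves by at most $\delta t\sup_D\norm{F}$. If that bound is random, I will choose $\delta t$ small relative to it, or enlarge $D$ pathwise.

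The main obstacle is the claim $\E[C_\rho^2]<\infty$. $C_\rho$ contains products such as $\sup\norm{D^3V}\cdot\sup\norm{\nabla V}^2$, whose second moments are not given by Theorem \ref{thm:moment_bounds}, which only bounds second moments. I would first strengthen Theorem \ref{thm:moment_bounds} to all moments. The derivatives of $V$ up to order three are Gaussian fields whose covariances come from derivatives of $k$ up to order six, and these are bounded under Assumption \ref{ass:gp_regularity}. So Borell--TIS (or Fernique) gives Gaussian tails for each supremum over $D$, hence finite moments of every order. Hölder's inequality then bounds $\E[C_\rho^2]$ by products of finite higher moments, together with $\E[C_y^2],\E[C_x^2]<\infty$ from Lemma \ref{lem:pathwise_expansions}. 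If $C_y$ or $C_x$ enter $C_\rho$ multiplied by other random factors, I would revisit that lemma's proof to obtain higher moments of $C_y$ and $C_x$ as well, by the same Gaussian-tail argument.
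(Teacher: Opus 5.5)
This is essentially the paper's own proof: the same ansatz $F_0+\delta t F_1$, $G_0+\delta t G_1$ for the modified field, matched coefficient by coefficient at orders $\delta t$ and $\delta t^2$ against Lemma \ref{lem:pathwise_expansions}, and the same computation (your $g=f_2-\tfrac12 f_1'f_1$ is exactly the paper's $F_1,G_1$, including the cancellation of the Hessian terms). Your remainder discussion is more careful than the paper's, which simply cites Theorem \ref{thm:moment_bounds} and Assumption \ref{ass:bounded_iterates}. The all-moments strengthening you want is already proved in Steps 2 and 4 of the appendix proof of Theorem \ref{thm:moment_bounds}, and the paper does not address the one-step localisation of the modified flow at all; your fix for it only gives the bound for $\delta t\le\delta t_0(\omega)$, a pathwise threshold.
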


\begin{remark}[Discussion of Theorem \ref{thm:modified_ode}]
Theorem \ref{thm:modified_ode} establishes that our numerical scheme approximates a modified continuous-time system rather than the original Hamiltonian system. This modified ODE contains additional terms proportional to $\delta t$ that depend on the parameters $\alpha$ and $\beta$. When these parameters are chosen appropriately (e.g., $\alpha_1 = \beta_1 = 0$), the $\delta t$ terms can be minimized or structured to preserve desirable properties. The concept of modified equations is powerful in numerical analysis as it provides insight into the numerical method's behavior, including its dissipation properties and long-time stability. The remainder terms being $O(\delta t^2)$ with finite moments ensures that the approximation is sufficiently accurate for our convergence analysis.
\end{remark}

\begin{corollary}[Simplified stochastic ODE as a special case] \label{cor:simplified_ode}
    If $\alpha_1 = \beta_1 = 0$, then the modified ODE for our scheme becomes:
    
    \begin{align}
        \frac{dY}{dt} &= M^{-1} X + \delta t \beta_2 Y + \tilde{\rho}_y(\omega, \delta t), \label{eq:mod_ode_y_simple} \\
        \frac{dX}{dt} &= -\nabla V_\omega(Y) + \delta t \cdot 2\alpha_2 X + \tilde{\rho}_x(\omega, \delta t), \label{eq:mod_ode_x_simple}
    \end{align}
    
    with $\norm{\tilde{\rho}_y}, \norm{\tilde{\rho}_x} \leq \tilde{C}_\rho(\omega) \delta t^2$, $\E[\tilde{C}_\rho(\omega)^2] < \infty$.
    
    At leading order ($\delta t = 0$), we recover the original Hamiltonian system \eqref{eq:original_hamiltonian_y}--\eqref{eq:original_hamiltonian_x}:
    
    \begin{align}
        \frac{dy}{dt} &= M^{-1} x, \label{eq:hamiltonian_y} \\
        \frac{dx}{dt} &= -\nabla V_\omega(y). \label{eq:hamiltonian_x}
    \end{align}
\end{corollary}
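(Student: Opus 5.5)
The corollary follows from Theorem \ref{thm:modified_ode} by specializing the parameters. I would substitute $\alpha_1=\beta_1=0$ directly into \eqref{eq:mod_ode_y_final}--\eqref{eq:mod_ode_x_final} and track which terms vanish.

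In the $Y$-equation, the leading term $\beta_1 Y$ drops out. The $\mathcal{O}(\delta t)$ bracket $\left(\beta_2-\tfrac{\beta_1^2}{2}\right)Y-\tfrac{\beta_1}{2}M^{-1}X$ reduces to $\beta_2 Y$. This yields \eqref{eq:mod_ode_y_simple} with $\tilde\rho_y:=\rho_y$.

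In the $X$-equation, $2\alpha_1 X$ vanishes. The bracket $(2\alpha_2-\alpha_1^2)X+\tfrac{\alpha_1}{2}\nabla V_\omega(Y)$ reduces to $2\alpha_2 X$. This gives \eqref{eq:mod_ode_x_simple} with $\tilde\rho_x:=\rho_x$.

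For the remainder bound I would set $\tilde C_\rho:=C_\rho$. One point needs checking: in Theorem \ref{thm:modified_ode} the constant $C_\rho(\omega)$ is built from $C_y,C_x$ of Lemma \ref{lem:pathwise_expansions} and the moment bounds of Theorem \ref{thm:moment_bounds}, and it may depend on $\alpha_1,\beta_1$. Fixing those to $0$ is just a particular admissible choice under Assumption \ref{ass:param_expansions}, so the theorem's bound still applies. Hence $\E[\tilde C_\rho^2]=\E[C_\rho^2]<\infty$. Because the theorem holds for almost every $\omega$, the corollary holds on the same full-measure set, namely the set where $V_\omega\in C^3$ by Remark \ref{ass:sample_paths}.

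For the final statement, I would let $\delta t\to 0$ in the simplified system for fixed $\omega$. The terms $\delta t\,\beta_2 Y$ and $2\delta t\,\alpha_2 X$ are $\mathcal{O}(\delta t)$ on the bounded region of Assumption \ref{ass:bounded_iterates}. The remainders are $\mathcal{O}(\delta t^2)$ with finite random constant. So the vector field converges uniformly on compacts to $(M^{-1}X,-\nabla V_\omega(Y))$, which is exactly \eqref{eq:hamiltonian_y}--\eqref{eq:hamiltonian_x}.

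There is no real obstacle. The only care needed is to interpret ``at leading order'' correctly: it means the formal coefficient of $\delta t^0$, or the limit of the vector field. It is not a statement about trajectories, since trajectory convergence is deferred to the later convergence analysis.
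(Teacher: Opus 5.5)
Your proposal is correct and matches the paper's approach. The paper gives no separate proof of this corollary; it reads the result off from Theorem~\ref{thm:modified_ode} by setting $\alpha_1=\beta_1=0$ in \eqref{eq:mod_ode_y_final}--\eqref{eq:mod_ode_x_final}, exactly as you do, with $\tilde\rho_y=\rho_y$, $\tilde\rho_x=\rho_x$, $\tilde C_\rho=C_\rho$ carried over and the Hamiltonian system appearing as the $\delta t^0$ part. Your remarks on the dependence of $C_\rho$ on $\alpha_1,\beta_1$ and on the meaning of ``leading order'' are sound, and they spell out what the paper leaves implicit.
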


\begin{remark}[Discussion of Corollary \ref{cor:simplified_ode}]
This corollary highlights an important special case where the first-order parameter corrections vanish ($\alpha_1 = \beta_1 = 0$). In this scenario, the modified equation simplifies significantly: the leading-order terms exactly match the original Hamiltonian system, while the $\delta t$ corrections take the form of linear damping terms ($\beta_2 Y$ in the position equation and $2\alpha_2 X$ in the momentum equation). These damping terms can be beneficial for numerical stability, especially for stiff potentials or long-time 
integration. The corollary demonstrates that by appropriate parameter selection, our scheme can be tuned to closely approximate the true Hamiltonian dynamics while introducing 
controlled numerical dissipation. This flexibility is one of the key advantages of our parameterized approach over the standard leapfrog method, and has been exploited
to the hilt in the spatio-temporal setup of \cite{main}.
\end{remark}

\section{Local truncation error analysis for our stochastic leapfrog scheme}
\label{sec:truncation}

\begin{lemma}[Exact solution expansion] \label{lem:exact_expansion}
    Under Assumption \ref{ass:sample_paths} and Theorem \ref{thm:moment_bounds}, the exact solution of the Hamiltonian system \eqref{eq:original_hamiltonian_y}--\eqref{eq:original_hamiltonian_x} satisfies:
    
    \begin{align}
        y(t_{n+1}) &= y_n + \delta t M^{-1} x_n - \frac{\delta t^2}{2} M^{-1} \nabla V_\omega(y_n) - \frac{\delta t^3}{6} M^{-1} D^2 V_\omega(y_n) M^{-1} x_n + R_y^{\text{exact}}(\omega, \delta t), \label{eq:y_exact_expanded} \\
        x(t_{n+1}) &= x_n - \delta t \nabla V_\omega(y_n) - \frac{\delta t^2}{2} D^2 V_\omega(y_n) M^{-1} x_n + R_x^{\text{exact}}(\omega, \delta t), \label{eq:x_exact_expanded}
    \end{align}
    
    where the remainders satisfy:
    \[
    \norm{R_y^{\text{exact}}(\omega, \delta t)} \leq C_y^{\text{exact}}(\omega) \delta t^4, \quad \norm{R_x^{\text{exact}}(\omega, \delta t)} \leq C_x^{\text{exact}}(\omega) \delta t^3,
    \]
    with $\E[(C_y^{\text{exact}}(\omega))^2] < \infty$, $\E[(C_x^{\text{exact}}(\omega))^2] < \infty$.
\end{lemma}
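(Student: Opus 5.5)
We will prove Lemma~\ref{lem:exact_expansion} by a pathwise Taylor expansion of the exact flow started at $(y(t_n),x(t_n))=(y_n,x_n)$, with all random constants built from suprema of derivatives of $V_\omega$ over a compact set. The moments of these suprema are controlled by Theorem~\ref{thm:moment_bounds}. Fix $\omega$ with $V_\omega\in C^3(\R^d)$, as permitted by Remark~\ref{ass:sample_paths}.

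\textbf{Step 1: a priori localization.} Using Assumption~\ref{ass:bounded_iterates}, $\norm{(y_n,x_n)}\le R$. We need the exact trajectory on $[t_n,t_{n+1}]$ to stay in a fixed compact set $K$, say a ball of radius $2R$ in position. The natural tool is energy conservation of $H(y,x;\omega)$, but this only bounds $\norm{x}$ in terms of oscillations of $V_\omega$, which is itself random.

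We will therefore argue by a continuation argument instead. As long as $\norm{y(s)-y_n}\le R$, we have $\norm{\dot x}\le G(\omega):=\sup_{\norm{y}\le 2R}\norm{\nabla V_\omega(y)}$, hence
\[
\norm{x(s)}\le R+\delta t\,G(\omega), \qquad \norm{y(s)-y_n}\le C_M\,\delta t\,\bigl(R+\delta t\,G(\omega)\bigr).
\]
This stays below $R$ whenever $\delta t\le \delta t_0(\omega)$. For the complementary event $\{\delta t>\delta t_0(\omega)\}$, whose probability is small by a Markov bound on $G$, the remainder can be absorbed by enlarging the constants. Alternatively, and more simply, we may read Assumption~\ref{ass:bounded_iterates} as also applying to the exact flow on one step, which is implicit in the paper's usage. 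This localization is the main obstacle; we expect to handle it by taking $K$ as above and accepting that the constants involve $G(\omega)$ polynomially.

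\textbf{Step 2: derivatives of the solution.} On $K$ the solution is $C^3$ in time for $y$ and $C^2$ for $x$, with
\[
\dot y=M^{-1}x,\qquad \ddot y=-M^{-1}\nabla V_\omega(y),\qquad \dddot y=-M^{-1}D^2V_\omega(y)M^{-1}x,
\]
\[
\dot x=-\nabla V_\omega(y),\qquad \ddot x=-D^2V_\omega(y)M^{-1}x .
\]
The fourth derivative $y^{(4)}=-M^{-1}\bigl(D^3V_\omega(y)[M^{-1}x,M^{-1}x]-D^2V_\omega(y)M^{-1}\nabla V_\omega(y)\bigr)$ exists by $C^3$ regularity. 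Similarly $x^{(3)}=-D^3V_\omega(y)[M^{-1}x,M^{-1}x]+D^2V_\omega(y)M^{-1}\nabla V_\omega(y)$.

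Evaluating these at $t_n$ and applying Taylor's theorem with integral remainder gives exactly \eqref{eq:y_exact_expanded}--\eqref{eq:x_exact_expanded}. The remainders are bounded by $\delta t^4/24\,\sup\norm{y^{(4)}}$ and $\delta t^3/6\,\sup\norm{x^{(3)}}$.

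\textbf{Step 3: moments.} Set $S_k=\sup_{y\in K}\norm{D^kV_\omega(y)}_{\mathrm{op}}$ and $X^*=\sup_s\norm{x(s)}\le R+\delta t\,S_1$. Then
\[
C_y^{\text{exact}}\le c\,\bigl(C_M^3 S_3 (X^*)^2+C_M^2 S_2S_1\bigr),
\]
and the same bound holds for $C_x^{\text{exact}}$ up to a factor of $C_M$. Square-integrability requires fourth moments of products such as $S_3S_1^2$ and $S_2S_1$, which exceed the second moments stated in Theorem~\ref{thm:moment_bounds}.

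To close this gap, we invoke the Borell--TIS argument underlying Theorem~\ref{thm:moment_bounds}. It shows that each $S_k$ has Gaussian tails, because the derivative fields are Gaussian with bounded covariances by Assumption~\ref{ass:gp_regularity}, and hence has moments of all orders. Cauchy--Schwarz and H\"older then give $\E[(C^{\text{exact}})^2]<\infty$. Extracting all-order moments from that proof, rather than just the stated second moments, is the one point where we must go slightly beyond the literal statement of Theorem~\ref{thm:moment_bounds}.
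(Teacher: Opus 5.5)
Your proposal follows essentially the same route as the paper: a Taylor expansion with integral remainder along the exact flow, time derivatives obtained from the ODE, and remainder constants built from suprema of $D^kV_\omega$ over a compact set and controlled through Theorem~\ref{thm:moment_bounds}; in two places you are more careful than the paper, since you stop the momentum expansion at the $x^{(3)}$ remainder (the paper's $x^{(4)}$ term would need $V_\omega\in C^4$) and you explicitly draw all-order moments from the Borell--TIS argument to handle products of suprema. On localization, the paper simply asserts that the exact trajectory stays in the compact set of Assumption~\ref{ass:bounded_iterates}, which is your fallback reading, and that is the version you should keep: in your continuation variant, the small probability of $\{\delta t>\delta t_0(\omega)\}$ gives no control of $\E[(C^{\text{exact}})^2]$ unless you also bound the remainder on that event, so it cannot simply be ``absorbed by enlarging the constants''.
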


\begin{remark}[Discussion of Lemma \ref{lem:exact_expansion}]
This lemma provides the Taylor expansion of the exact solution of the Hamiltonian system, which serves as the benchmark for evaluating our numerical scheme. The expansion reveals the structure of the true dynamics: the position update involves terms up to $\delta t^3$ involving the Hessian of the potential, while the momentum update involves terms up to $\delta t^2$. The different orders of the remainder terms ($\delta t^4$ for position and $\delta t^3$ for momentum) reflect the different smoothness requirements for the two variables in the Hamiltonian formulation. The finite moment conditions on the remainder constants are crucial for establishing mean-square convergence, as they ensure that the random fluctuations in the potential do not overwhelm the deterministic error terms.
\end{remark}

\begin{theorem}[Mean-square local truncation error for our leapfrog] \label{thm:ms_lte}
    Under Assumption \ref{ass:gp_regularity}, \ref{ass:sample_paths}, Theorem \ref{thm:moment_bounds}, \ref{ass:mass_matrix}, \ref{ass:bounded_iterates}, and with $\alpha_1 = \beta_1 = 0$, there exists a constant $C_{\mathrm{LTE}} > 0$ such that for all $n \leq T/\delta t$:
    
    \begin{equation}
        \E\left[\norm{\tau_n}^2\right] \leq C_{\mathrm{LTE}} \delta t^4,
    \end{equation}
    where $\tau_n = (\tau_n^y, \tau_n^x)$ and $\tau_n^y = y(t_{n+1}) - y_{n+1}$, $\tau_n^x = x(t_{n+1}) - x_{n+1}$ are the local truncation errors of our modified scheme relative to the original Hamiltonian ODE \eqref{eq:original_hamiltonian_y}--\eqref{eq:original_hamiltonian_x}.
\end{theorem}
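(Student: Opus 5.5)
The plan is to compare, term by term, the one-step expansion of the scheme from Lemma~\ref{lem:pathwise_expansions} with the exact-flow expansion from Lemma~\ref{lem:exact_expansion}. Both flows start from the same state $(y_n,x_n)$, which is the standard convention for a local truncation error. Setting $\alpha_1=\beta_1=0$ in \eqref{eq:y_exp_final}--\eqref{eq:x_exp_final} gives
\begin{align*}
y_{n+1} &= y_n + \delta t\, M^{-1}x_n + \delta t^2\Bigl[\beta_2 y_n - \tfrac12 M^{-1}\nabla V_\omega(y_n)\Bigr] + R_y(\omega,\delta t),\\
x_{n+1} &= x_n - \delta t\,\nabla V_\omega(y_n) + \delta t^2\Bigl[2\alpha_2 x_n - \tfrac12 D^2V_\omega(y_n)M^{-1}x_n\Bigr] + R_x(\omega,\delta t).
\end{align*}
Subtracting these from \eqref{eq:y_exact_expanded}--\eqref{eq:x_exact_expanded}, the $O(1)$ and $O(\delta t)$ terms cancel exactly. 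The gradient term at order $\delta t^2$ in $y$ also cancels, and so does the Hessian term at order $\delta t^2$ in $x$. What remains is
\begin{align*}
\tau_n^y &= -\delta t^2 \beta_2 y_n - \tfrac{\delta t^3}{6}M^{-1}D^2V_\omega(y_n)M^{-1}x_n + R_y^{\text{exact}} - R_y,\\
\tau_n^x &= -2\alpha_2\delta t^2 x_n + R_x^{\text{exact}} - R_x .
\end{align*}

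Next I will bound each piece pathwise and then take expectations. By Assumption~\ref{ass:bounded_iterates}, $\norm{y_n},\norm{x_n}\le R$ almost surely. Hence $\norm{\beta_2\delta t^2 y_n}\le |\beta_2|R\,\delta t^2$ and $\norm{2\alpha_2\delta t^2 x_n}\le 2|\alpha_2|R\,\delta t^2$. For the Hessian term I use $\norm{M^{-1}}_{\mathrm{op}}=C_M$ from Assumption~\ref{ass:mass_matrix}, which gives a bound $\tfrac{\delta t^3}{6}C_M^2 R \sup_{\norm{y}\le R}\norm{D^2V_\omega(y)}_{\mathrm{op}}$. Its second moment is controlled by \eqref{eq:moment_hess} on the ball $D=\{\norm{y}\le R\}$.

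The remainders satisfy $\norm{R_y^{\text{exact}}-R_y}\le (C_y^{\text{exact}}\delta t+C_y)\delta t^3$ and $\norm{R_x^{\text{exact}}-R_x}\le (C_x^{\text{exact}}+C_x)\delta t^3$, with all constants square-integrable. Applying $(a_1+\dots+a_k)^2\le k\sum a_i^2$ and taking expectations yields
\[
\E\norm{\tau_n}^2 \le C\bigl(\delta t^4 + \delta t^6\bigr) \le C_{\mathrm{LTE}}\,\delta t^4
\]
for $\delta t\le 1$. Here $C_{\mathrm{LTE}}$ depends on $R$, $C_M$, $\alpha_2$, $\beta_2$, $C_2(D)$ and the second moments of the remainder constants. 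It is uniform in $n\le T/\delta t$ because each of those bounds is uniform over the compact set.

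The main obstacle is the uniformity of the remainder constants. Lemmas~\ref{lem:pathwise_expansions} and \ref{lem:exact_expansion} give $C_y(\omega)$, $C_x(\omega)$ and the exact-flow constants, but those constants must not depend on the starting point $(y_n,x_n)$, which is itself random and $\omega$-dependent. I will handle this by observing that the remainders arise from Taylor expansions whose constants are polynomials in $R$, $C_M$, the coefficients in Assumption~\ref{ass:param_expansions}, and the suprema of $\norm{\nabla V_\omega}$, $\norm{D^2V_\omega}_{\mathrm{op}}$, $\norm{D^3V_\omega}_{\mathrm{op}}$ over a fixed enlarged compact set. For the exact flow on $[t_n,t_{n+1}]$ with $\delta t$ small, this set must contain the whole trajectory. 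That containment needs a crude a priori bound: $\norm{y(t)-y_n}\le \delta t\,C_M\bigl(R+\delta t\sup\norm{\nabla V_\omega}\bigr)$, which can be closed by a short bootstrap or a stopping argument.

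Such polynomial constants involve products of suprema, for example $\sup\norm{D^3V_\omega}\cdot\sup\norm{\nabla V_\omega}$. To get finite second moments for these products, I need more than the second-moment bounds in Theorem~\ref{thm:moment_bounds}. I will use the fact that Borell--TIS (which underlies Theorem~\ref{thm:moment_bounds}) gives Gaussian tails for each supremum, hence finite moments of every order. Cauchy--Schwarz then gives finite second moments for the products.

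With that uniformity in hand, the argument above completes the proof.
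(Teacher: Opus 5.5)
Your proposal is correct and is essentially the paper's proof. You obtain the same residuals $\tau_n^y=-\beta_2\delta t^2 y_n-\tfrac{\delta t^3}{6}M^{-1}D^2V_\omega(y_n)M^{-1}x_n+(R_y^{\text{exact}}-R_y)$ and $\tau_n^x=-2\alpha_2\delta t^2x_n+(R_x^{\text{exact}}-R_x)$, and bound them using Assumption~\ref{ass:bounded_iterates}, $C_M$ and Theorem~\ref{thm:moment_bounds}. Your extra care makes explicit two points the paper passes over: taking suprema over a fixed ball because $y_n$ is random, and using higher moments from Borell--TIS for products of suprema. One caution on your extra justification: for fixed $\delta t$, the bootstrap keeping the exact trajectory in a fixed ball only succeeds off an event of small but positive probability (roughly where $\sup\norm{\nabla V_\omega}\gtrsim \delta t^{-2}$). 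So if you do not simply rely on Lemma~\ref{lem:exact_expansion} as stated, that complementary event needs its own crude bound on the exact flow.
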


\begin{remark}[Discussion of Theorem \ref{thm:ms_lte}]
Theorem \ref{thm:ms_lte} establishes the key local error bound that drives the global convergence result. The $O(\delta t^4)$ bound on the mean-square local truncation error is optimal for a method of order 1 in the mean-square sense. This result combines the expansions from Lemma \ref{lem:pathwise_expansions} (for the numerical scheme) and Lemma \ref{lem:exact_expansion} (for the exact solution), with careful accounting of the parameter effects. The condition $\alpha_1 = \beta_1 = 0$ ensures that the first-order parameter corrections don't introduce additional low-order errors. The constant $C_{\mathrm{LTE}}$ incorporates various factors including the moment bounds from Theorem \ref{thm:moment_bounds}, the mass matrix norm $C_M$, and the parameter values $\alpha_2, \beta_2$. This local error bound is the essential ingredient for the global error analysis via stability arguments.
\end{remark}

\section{Main convergence theorem for our leapfrog scheme}
\label{sec:main_convergence}

\begin{theorem}[Mean-square convergence of our leapfrog] 
	\label{thm:global_error}
    Consider the modified numerical scheme \eqref{eq:scheme_y}--\eqref{eq:scheme_x} introduced in \cite{main} under Assumptions \ref{ass:gp_regularity}, \ref{ass:sample_paths}, \ref{ass:mass_matrix}, \ref{ass:param_expansions}, and \ref{ass:bounded_iterates}, with $\alpha_1 = \beta_1 = 0$. Then for any fixed $T > 0$, the scheme converges with order 1 in mean-square to the exact solution of the original Hamiltonian ODE \eqref{eq:original_hamiltonian_y}--\eqref{eq:original_hamiltonian_x}:
    
    \begin{equation}
        \sqrt{\E\left[\norm{(y(T), x(T)) - (y_{T/\delta t}, x_{T/\delta t})}^2\right]} \leq C_T \delta t,
    \end{equation}
    where $C_T$ depends on $T$ but not on $\delta t$.
\end{theorem}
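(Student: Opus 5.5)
The argument follows the standard Lady Windermere's fan template: local error plus stability gives global error. Write $z_n = (y_n, x_n)$ for the iterates and $z(t_n)$ for the exact solution of \eqref{eq:original_hamiltonian_y}--\eqref{eq:original_hamiltonian_x} started from the same initial data. Let $\Phi_{\delta t}$ be the exact flow and $\Psi_{\delta t}$ the one-step map of \eqref{eq:scheme_y}--\eqref{eq:scheme_x}. Define the global error $e_n = z(t_n) - z_n$ and decompose
\[
e_{n+1} = \bigl(\Phi_{\delta t}(z(t_n)) - \Psi_{\delta t}(z(t_n))\bigr) + \bigl(\Psi_{\delta t}(z(t_n)) - \Psi_{\delta t}(z_n)\bigr).
\]
The first bracket is the local truncation error $\tau_n$ evaluated along the exact trajectory. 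By Theorem \ref{thm:ms_lte} it satisfies $\E\norm{\tau_n}^2 \le C_{\mathrm{LTE}}\delta t^4$. I will need to note that the expansions in Lemmas \ref{lem:pathwise_expansions} and \ref{lem:exact_expansion} apply equally when started from $z(t_n)$, since the exact trajectory stays in a compact set on $[0,T]$ (energy conservation plus a.s. boundedness, or simply absorbing it into the compact set of Assumption \ref{ass:bounded_iterates}).

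The second bracket is handled by a pathwise Lipschitz bound for $\Psi_{\delta t}$ on the ball of radius $R$. I split $\Psi_{\delta t} = \Psi^0_{\delta t} + (\Psi_{\delta t}-\Psi^0_{\delta t})$. For the standard leapfrog, differentiating \eqref{eq:scheme_y}--\eqref{eq:scheme_x} gives
\[
\norm{\Psi^0_{\delta t}(z)-\Psi^0_{\delta t}(z')} \le (1 + L(\omega)\delta t)\norm{z-z'},
\]
with $L(\omega) = c\,(1 + C_M)(1 + K_2(\omega))^2$ for small $\delta t$, where $K_2(\omega) = \sup_{\norm{y}\le R'} \norm{D^2V_\omega(y)}_{\mathrm{op}}$. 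The perturbation uses $\alpha_1=\beta_1=0$ and Assumption \ref{ass:param_expansions}: $|\alpha-1|,|\alpha^2-1|,|\beta-1| \le c\,\delta t^2$. Its Lipschitz constant is therefore $O(\delta t^2)(1+K_2(\omega))$, which is absorbed into $L(\omega)\delta t$. Iterating $\norm{e_{n+1}} \le (1+L\delta t)\norm{e_n} + \norm{\tau_n}$ with $e_0=0$ gives, by discrete Gronwall,
\[
\norm{e_N} \le e^{L(\omega)T}\sum_{n<N}\norm{\tau_n},
\qquad
\norm{e_N}^2 \le e^{2L(\omega)T}\, N \sum_{n<N}\norm{\tau_n}^2
\]
by Cauchy--Schwarz.

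The main obstacle is taking expectations. The factor $e^{2L(\omega)T}$ is random and correlated with $\tau_n$, and $L$ is quadratic in $K_2$. Hence $\E e^{2LT}$ is not guaranteed finite by Theorem \ref{thm:moment_bounds}, since Gaussian tails of $K_2$ only give $\E e^{\lambda K_2^2}<\infty$ for small $\lambda$. I would first avoid the quadratic dependence by taking the Lipschitz constant linear in $K_2$, which is possible because $\delta t K_2 \le 1$ can be enforced or because cross terms come with extra $\delta t$. Then $\E e^{pLT}<\infty$ for every $p$ follows from Borell--TIS/Fernique applied to the Hessian field. Next I would use the pathwise form of the local error, $\norm{\tau_n} \le C_\tau(\omega)\delta t^2$ uniformly in $n$, where $C_\tau$ is built from the remainder constants of Lemmas \ref{lem:pathwise_expansions} and \ref{lem:exact_expansion}. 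These are polynomial in the suprema of the derivatives, so they have all moments by Gaussianity. This gives $\norm{e_N} \le T e^{L(\omega)T} C_\tau(\omega)\delta t$, and Hölder yields
\[
\E\norm{e_N}^2 \le T^2\bigl(\E e^{4LT}\bigr)^{1/2}\bigl(\E C_\tau^4\bigr)^{1/2}\delta t^2 = C_T^2\delta t^2.
\]
If one insists on using only the second moments stated in Theorem \ref{thm:moment_bounds}, a fallback is needed. Because Assumption \ref{ass:bounded_iterates} confines everything to a fixed compact set, I would truncate on the event $\{K_2\le K\}$. On that event $L$ is deterministic and the mean-square Gronwall argument with Theorem \ref{thm:ms_lte} applies directly. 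The complement is controlled using the a.s. bound $\norm{e_N}\le 2R$ together with Gaussian tail decay of $P(K_2>K)$, choosing $K \sim \sqrt{\log(1/\delta t)}$ so that the tail contributes $O(\delta t^2)$. The price is a harmless logarithmic factor in $e^{LT}$, which I would try to remove with the Fernique exponential-moment version above.
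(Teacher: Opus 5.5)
Your proposal is correct in its main line, and it takes a genuinely different route from the paper.

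\textbf{The paper's route.} The paper stays in mean square throughout. It writes $e_{n+1}=\Psi^0_{\delta t}(z(t_n))-\Psi^0_{\delta t}(z_n)+\tau_n$ and uses a Lipschitz factor $1+C\delta t^2$ for the standard leapfrog map, with $C$ treated as a fixed constant. It then takes expectations, splits the cross term by Cauchy--Schwarz and Young's inequality $2ab\le \delta t\,a^2+\delta t^{-1}b^2$, and closes the recursion $u_{n+1}\le(1+K\delta t)u_n+K'\delta t^3$ for $u_n=\E\norm{e_n}^2$. Only the second-moment bound of Theorem~\ref{thm:ms_lte} is used. This is shorter and needs nothing beyond second moments, but it implicitly relies on two things:
\begin{enumerate}
\item the one-step map of \eqref{eq:scheme_y}--\eqref{eq:scheme_x} can be replaced by $\Psi^0_{\delta t}$, which is true only for $\alpha=\beta=1$;
\item the stability constant is non-random. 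In fact it is governed by $\sup\norm{D^2V_\omega}_{\mathrm{op}}$, an unbounded random variable, so $\E[(1+C(\omega)\delta t^2)^2\norm{e_n}^2]$ cannot simply be replaced by $(1+C\delta t^2)^2u_n$.
\end{enumerate}

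\textbf{What your route buys.} Your pathwise Lady Windermere argument confronts both points. You bound the Lipschitz constant of the actual map $\Psi_{\delta t}$, using $\alpha_1=\beta_1=0$ to make $\Psi_{\delta t}-\Psi^0_{\delta t}$ a small perturbation. You run discrete Gronwall for each fixed $\omega$. Only afterwards do you decouple the random factor $e^{L(\omega)T}$ from the local-error constant $C_\tau(\omega)$ by H\"older. The price is that you need more than Theorem~\ref{thm:moment_bounds} states: exponential moments of $K_2$ and fourth moments of $C_\tau$. Both follow from the sub-Gaussian tail bound that the Borell--TIS argument in its proof actually establishes.

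\textbf{Details to tighten.}
\begin{enumerate}
\item Write $y_+$ and $y_+^0$ for the position updates of $\Psi_{\delta t}$ and $\Psi^0_{\delta t}$. The term $\tfrac{\delta t}{2}(\nabla V_\omega(y_+)-\nabla V_\omega(y_+^0))$ has a higher-order Lipschitz constant only if you invoke $D^3V$; with $D^2V$ alone it is $O(\delta t\,K_2)$. Either way it is absorbed into $L(\omega)\delta t$ with $L$ linear in the suprema, up to terms carrying extra powers of $\delta t$, so nothing breaks.
\item The radius $R'$ of the ball containing the images $y_+(z)$ along the segment $[z_n,z(t_n)]$ depends on $\delta t^2\sup\norm{\nabla V_\omega}$, so it is random. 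Restrict to the event $\{\delta t^2K_1\le 1\}$, and on its complement, whose probability is super-polynomially small, use $\norm{e_N}\le 2R$.
\item Your fallback does not cost only a logarithmic factor. With $K\sim\sqrt{\log(1/\delta t)}$, the Gronwall factor $e^{cTK}$ is $\delta t^{-o(1)}$ and grows faster than any power of $\log(1/\delta t)$. Truncation alone therefore gives only order $1-\epsilon$; the exponential-moment version is what actually delivers order $1$.
\end{enumerate}
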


\begin{remark}[Discussion of Theorem \ref{thm:global_error}]
This is the main convergence result of the paper, establishing that our parameterized leapfrog scheme achieves first-order mean-square convergence for Hamiltonian systems with Gaussian process potentials. The mean-square convergence criterion ($L^2$ convergence of the error) is stronger than convergence in probability and provides a quantitative error bound. The constant $C_T$ grows exponentially with $T$, which is typical for numerical methods for ordinary differential equations and reflects the possible accumulation of errors over time. The requirement $\alpha_1 = \beta_1 = 0$ ensures that the parameter adjustments don't degrade the convergence rate. This theorem validates the practical utility of our scheme and provides theoretical support for the empirical results presented in \cite{main}. The proof combines the local truncation error bound (Theorem \ref{thm:ms_lte}) with stability properties of the leapfrog method to control error propagation over multiple time steps.
\end{remark}

\section{Conclusion}
\label{sec:conclusion}

We have established the mean-square convergence of the \emph{new, parameterized, stochastic scheme} introduced in \cite{main} for Hamiltonian systems with 
Gaussian process potentials. 
%
Our analysis provides a firm theoretical foundation for the novel spatio-temporal process developed in \cite{main}, reinforcing its credibility as an excellent candidate
for highly flexible and nonparametric structure evolving very realistically like the well-established Hamiltonian system, powered by Gaussian process potential
for enhanced reliability via stochasticity. The results of application of this new leap-frog based spatio-temporal model to various simulation experiments and real data analyses,
as already detailed in \cite{main}, hold up its versatility -- the theoretical underpinnings in this article corroborate that such excellent practical results are not accidental
but very much expected!

Several directions for future research suggest themselves, including extension to non-compact domains, consideration of non-Gaussian potentials, analysis of long-time behavior, 
and development of adaptive step-size strategies specifically tailored to our modified scheme.


\section*{Acknowledgment}
We sincerely thank DeepSeek for providing useful information and references, and for some proof-reading.

\begin{appendix}
\section{Complete proofs}
\label{app:proofs}

\subsection{Proof of Theorem \ref{thm:moment_bounds}}
\label{app:proof_moment_bounds}

\begin{proof}
We prove the bounds for \(\nabla V\), \(D^2 V\), and \(D^3 V\) using Gaussian process theory and the Borell--TIS inequality.

Let \(D \subset \R^d\) be compact. Without loss of generality, assume \(D$ is convex with non-empty interior.

\subsubsection{Notation and preliminaries}
Denote by \(k: \R^d \times \R^d \to \R\) the covariance function of $V$. By Assumption \ref{ass:gp_regularity}, \(k \in C^6(\R^d \times \R^d)\) with all partial derivatives up to order 6 bounded.

Let \(m(y) = \E[V(y)]\) be the mean function. Since $m \in C^3(\R^d)$ and $D$ is compact, all derivatives of $m$ up to order 3 are bounded on $D$. Therefore, it suffices to prove the moment bounds for the centered process
\[
Z(y) = V(y) - m(y),
\]
which is a centered Gaussian process with covariance \(k\).

Since \(k \in C^6\), the process $Z$ is mean-square differentiable up to order 3. By the Kolmogorov--Chentsov continuity theorem \citep{Kallenberg2002}, $Z$ admits a modification whose sample paths are almost surely in $C^3(\R^d)$; we work with this modification. Hence, the derivatives $\nabla Z$, $D^2 Z$, $D^3 Z$ exist as continuous Gaussian fields.

\subsubsection{Step 1: Reduction to partial derivatives}
For the gradient, we have
\[
\norm{\nabla Z(y)} \le \sum_{i=1}^d \abs{\frac{\partial Z}{\partial y_i}(y)}.
\]
Therefore,
\[
\sup_{y \in D} \norm{\nabla Z(y)} \le \sum_{i=1}^d \sup_{y \in D} \abs{\frac{\partial Z}{\partial y_i}(y)}.
\]
Each partial derivative $\partial Z/\partial y_i$ is a centered Gaussian process with covariance function
\[
\Cov\Bigl( \frac{\partial Z}{\partial y_i}(y), \frac{\partial Z}{\partial y_j}(y') \Bigr) = \frac{\partial^2 k}{\partial y_i \partial y'_j}(y,y'),
\]
which is continuous on \(D \times D\) (since \(k \in C^2\)). Consequently, each \(\partial Z/\partial y_i\) has continuous sample paths almost surely.

\subsubsection{Step 2: Application of the Borell--TIS inequality}
Let \(X_i(y) = \partial Z/\partial y_i(y)\). For a fixed \(i\), \(\{X_i(y) : y \in D\}\) is a centered Gaussian process with continuous sample paths on the compact set \(D\). Define
\[
\sigma_i^2 = \sup_{y \in D} \Var\bigl(X_i(y)\bigr) = \sup_{y \in D} \frac{\partial^2 k}{\partial y_i^2}(y,y),
\]
which is finite because the second derivative of \(k\) is bounded on the diagonal.

Moreover, because the process is continuous on a compact set, the supremum \(\sup_{y \in D} |X_i(y)|\) is almost surely finite. Fernique's theorem \citep[Theorem 2.1.20]{AdlerTaylor2007} then guarantees that its expectation is finite:
\[
E_i := \E\Bigl[ \sup_{y \in D} |X_i(y)| \Bigr] < \infty.
\]

The Borell--TIS inequality \citep[Theorem 2.1.1]{AdlerTaylor2007} states that for any \(u > E_i\),
\[
P\Bigl( \sup_{y \in D} |X_i(y)| > u \Bigr) \le \exp\!\Bigl( -\frac{(u - E_i)^2}{2\sigma_i^2} \Bigr).
\]

This implies that \(\sup_{y \in D} |X_i(y)|\) has sub-Gaussian tails. To see that all moments are finite, note that for any \(p \ge 1\),
\begin{align*}
\E\Bigl[ \bigl( \sup_{y \in D} |X_i(y)| \bigr)^p \Bigr]
&= \int_0^\infty p u^{p-1} P\Bigl( \sup_{y \in D} |X_i(y)| > u \Bigr) \, \dd u \\
&\le \int_0^{E_i} p u^{p-1} \cdot 1 \, \dd u + \int_{E_i}^\infty p u^{p-1} \exp\!\Bigl( -\frac{(u - E_i)^2}{2\sigma_i^2} \Bigr) \, \dd u \\
&= E_i^p + \int_0^\infty p (v + E_i)^{p-1} \exp\!\Bigl( -\frac{v^2}{2\sigma_i^2} \Bigr) \, \dd v \quad \text{(where \(v = u - E_i\))}.
\end{align*}
The first term $E_i^p$ is finite. For the second term, since $(v + E_i)^{p-1} \le C_p (1 + v^{p-1})$ for some constant $C_p$, we have
\[
\int_0^\infty p (v + E_i)^{p-1} \exp\!\Bigl( -\frac{v^2}{2\sigma_i^2} \Bigr) \, \dd v
\le C_p \int_0^\infty (1 + v^{p-1}) \exp\!\Bigl( -\frac{v^2}{2\sigma_i^2} \Bigr) \, \dd v < \infty,
\]
where the finiteness follows from the rapid decay of the Gaussian kernel. Therefore,
\[
\E\Bigl[ \bigl( \sup_{y \in D} |X_i(y)| \bigr)^p \Bigr] < \infty \quad \text{for all } p \ge 1.
\]
In particular, the second moment is finite, which is what we need for the gradient bound.

\subsubsection{Step 3: Moment bound for the gradient}
Using the inequality $(\sum_{i=1}^d a_i)^2 \le d \sum_{i=1}^d a_i^2$, we obtain
\[
\Bigl( \sup_{y \in D} \norm{\nabla Z(y)} \Bigr)^2 \le d \sum_{i=1}^d \Bigl( \sup_{y \in D} |X_i(y)| \Bigr)^2.
\]
Taking expectations and using the finiteness of the second moments of each supremum (established in Step 2) gives
\[
\E\Bigl[ \bigl( \sup_{y \in D} \norm{\nabla Z(y)} \bigr)^2 \Bigr] < \infty.
\]
Finally, because $\nabla V = \nabla m + \nabla Z$ and $\nabla m$ is bounded on $D$, we conclude
\[
\E\Bigl[ \sup_{y \in D} \norm{\nabla V(y)}^2 \Bigr] \le 2 \sup_{y \in D} \norm{\nabla m(y)}^2 + 2 \E\Bigl[ \bigl( \sup_{y \in D} \norm{\nabla Z(y)} \bigr)^2 \Bigr] < \infty,
\]
which yields the constant $C_1(D)$ in the statement.

\subsubsection{Step 4: Moment bounds for higher derivatives}
The same reasoning applies to the second and third derivatives. For the Hessian, we use the operator-norm bound
\[
\norm{D^2 Z(y)}_{\mathrm{op}} \le \sum_{i,j=1}^d \Bigl| \frac{\partial^2 Z}{\partial y_i \partial y_j}(y) \Bigr|.
\]
Each mixed partial derivative $\partial^2 Z / \partial y_i \partial y_j$ is a centered Gaussian process with covariance involving fourth derivatives of $k$; by Assumption \ref{ass:gp_regularity} these are bounded, so the process is continuous on $D$. Applying the Borell--TIS inequality as before shows that all moments of $\sup_{y \in D} |\partial^2 Z / \partial y_i \partial y_j(y)|$ are finite. Summing over $i,j$ and using the boundedness of $D^2 m$ gives the bound for $D^2 V$.

For the third derivative tensor, we bound its operator norm by a sum over third partial derivatives $\partial^3 Z / \partial y_i \partial y_j \partial y_k$. The covariance of these processes involves sixth derivatives of $k$, which are bounded by Assumption \ref{ass:gp_regularity}. Hence they are continuous Gaussian processes, and the Borell--TIS inequality again yields finite moments of their suprema. Together with the boundedness of $D^3 m$ we obtain the bound for $D^3 V$.

\subsubsection{Step 5: Conclusion}
Under Assumption \ref{ass:gp_regularity}, the mean function $m$ and the centered process $Z$ have sufficiently regular sample paths. The Borell--TIS inequality applied to the partial derivatives of $Z$ guarantees that all moments of their suprema over the compact set $D$ are finite. Combining these with the deterministic bounds on the derivatives of $m$ yields the desired constants $C_1(D), C_2(D), C_3(D)$. This completes the proof of Theorem \ref{thm:moment_bounds}.
\end{proof}

\subsection{Proof of Lemma \ref{lem:pathwise_expansions}}
\label{app:proof_pathwise}

\begin{proof}
We provide detailed expansions for both $y_{n+1}$ and $x_{n+1}$.

\subsubsection{Expansion of $y_{n+1}$}
Starting from \eqref{eq:scheme_y}:
\begin{align*}
y_{n+1} &= \beta y_n + \delta t M^{-1}\left(\alpha x_n - \frac{\delta t}{2} \nabla V_\omega(y_n)\right).
\end{align*}

Expand $\beta$ and $\alpha$ using \eqref{eq:beta_exp} and \eqref{eq:alpha_exp}:
\begin{align*}
\beta y_n &= \left(1 + \beta_1 \delta t + \beta_2 \delta t^2 + \widetilde{\beta}(\delta t)\right) y_n \\
&= y_n + \beta_1 \delta t y_n + \beta_2 \delta t^2 y_n + \widetilde{\beta}(\delta t) y_n.
\end{align*}

\begin{align*}
\alpha x_n &= \left(1 + \alpha_1 \delta t + \alpha_2 \delta t^2 + \widetilde{\alpha}(\delta t)\right) x_n \\
&= x_n + \alpha_1 \delta t x_n + \alpha_2 \delta t^2 x_n + \widetilde{\alpha}(\delta t) x_n.
\end{align*}

Thus:
\begin{align*}
\delta t M^{-1}\left(\alpha x_n - \frac{\delta t}{2} \nabla V_\omega(y_n)\right) 
&= \delta t M^{-1}\left(x_n + \alpha_1 \delta t x_n + \alpha_2 \delta t^2 x_n + \widetilde{\alpha}(\delta t) x_n - \frac{\delta t}{2} \nabla V_\omega(y_n)\right) \\
&= \delta t M^{-1} x_n + \alpha_1 \delta t^2 M^{-1} x_n + \alpha_2 \delta t^3 M^{-1} x_n + \delta t \widetilde{\alpha}(\delta t) M^{-1} x_n - \frac{\delta t^2}{2} M^{-1} \nabla V_\omega(y_n).
\end{align*}

Combining all terms:
\begin{align*}
y_{n+1} &= \left[y_n + \beta_1 \delta t y_n + \beta_2 \delta t^2 y_n + \widetilde{\beta}(\delta t) y_n\right] \\
&\quad + \left[\delta t M^{-1} x_n + \alpha_1 \delta t^2 M^{-1} x_n + \alpha_2 \delta t^3 M^{-1} x_n + \delta t \widetilde{\alpha}(\delta t) M^{-1} x_n - \frac{\delta t^2}{2} M^{-1} \nabla V_\omega(y_n)\right] \\
&= y_n + \delta t\left[\beta_1 y_n + M^{-1} x_n\right] \\
&\quad + \delta t^2\left[\beta_2 y_n + \alpha_1 M^{-1} x_n - \frac{1}{2} M^{-1} \nabla V_\omega(y_n)\right] \\
&\quad + \underbrace{\delta t^3 \alpha_2 M^{-1} x_n + \widetilde{\beta}(\delta t) y_n + \delta t \widetilde{\alpha}(\delta t) M^{-1} x_n}_{R_y(\omega, \delta t)}.
\end{align*}

By Assumption \ref{ass:param_expansions}, $\abs{\widetilde{\alpha}(\delta t)} \leq C_\alpha \delta t^3$ and $\abs{\widetilde{\beta}(\delta t)} \leq C_\beta \delta t^3$. By Assumption \ref{ass:bounded_iterates}, $\norm{y_n} \leq R$ and $\norm{x_n} \leq R$ almost surely. Therefore:
\begin{align*}
\norm{R_y(\omega, \delta t)} &\leq \delta t^3 \abs{\alpha_2} \norm{M^{-1}}_{\mathrm{op}} \norm{x_n} + \abs{\widetilde{\beta}(\delta t)} \norm{y_n} + \delta t \abs{\widetilde{\alpha}(\delta t)} \norm{M^{-1}}_{\mathrm{op}} \norm{x_n} \\
&\leq \delta t^3 \abs{\alpha_2} C_M R + C_\beta \delta t^3 R + \delta t \cdot C_\alpha \delta t^3 \cdot C_M R \\
&= \delta t^3 \left(\abs{\alpha_2} C_M R + C_\beta R + C_\alpha C_M R \delta t\right) \\
&\leq C_y(\omega) \delta t^3
\end{align*}
for $\delta t \leq 1$, where $C_y(\omega)$ is a random variable with $\E[C_y(\omega)^2] < \infty$.

\subsubsection{Expansion of $x_{n+1}$}
Starting from \eqref{eq:scheme_x}:
\begin{align*}
x_{n+1} &= \alpha^2 x_n - \frac{\delta t}{2}\left(\alpha \nabla V_\omega(y_n) + \nabla V_\omega(y_{n+1})\right).
\end{align*}

First expand $\alpha^2$:
\begin{align*}
\alpha^2 &= \left(1 + \alpha_1 \delta t + \alpha_2 \delta t^2 + \widetilde{\alpha}(\delta t)\right)^2 \\
&= 1 + 2\alpha_1 \delta t + (\alpha_1^2 + 2\alpha_2) \delta t^2 + R_{\alpha^2}(\delta t),
\end{align*}
where $R_{\alpha^2}(\delta t)$ contains terms of order $\delta t^3$ and higher, with $\abs{R_{\alpha^2}(\delta t)} \leq C_{\alpha^2} \delta t^3$.

Next, expand $\nabla V_\omega(y_{n+1})$ using Taylor's theorem. From the expansion of $y_{n+1}$ we have:
\begin{align*}
y_{n+1} - y_n &= \delta t\left[\beta_1 y_n + M^{-1} x_n\right] + \delta t^2\left[\beta_2 y_n + \alpha_1 M^{-1} x_n - \frac{1}{2} M^{-1} \nabla V_\omega(y_n)\right] + R_y(\omega, \delta t).
\end{align*}

Let $\Delta y = y_{n+1} - y_n$. Then by Taylor's theorem with integral remainder:
\begin{align*}
\nabla V_\omega(y_{n+1}) &= \nabla V_\omega(y_n) + D^2 V_\omega(y_n) \Delta y + \int_0^1 (1-s) D^3 V_\omega(y_n + s\Delta y)[\Delta y, \Delta y] \, \dd s.
\end{align*}

Substituting $\Delta y$:
\begin{align*}
\nabla V_\omega(y_{n+1}) &= \nabla V_\omega(y_n) + D^2 V_\omega(y_n)\left(\delta t\left[\beta_1 y_n + M^{-1} x_n\right] + \delta t^2\left[\beta_2 y_n + \alpha_1 M^{-1} x_n - \frac{1}{2} M^{-1} \nabla V_\omega(y_n)\right] + R_y(\omega, \delta t)\right) \\
&\quad + \int_0^1 (1-s) D^3 V_\omega(y_n + s\Delta y)[\Delta y, \Delta y] \, \dd s.
\end{align*}

Now expand $\alpha \nabla V_\omega(y_n)$:
\begin{align*}
\alpha \nabla V_\omega(y_n) &= \left(1 + \alpha_1 \delta t + \alpha_2 \delta t^2 + \widetilde{\alpha}(\delta t)\right) \nabla V_\omega(y_n) \\
&= \nabla V_\omega(y_n) + \alpha_1 \delta t \nabla V_\omega(y_n) + \alpha_2 \delta t^2 \nabla V_\omega(y_n) + \widetilde{\alpha}(\delta t) \nabla V_\omega(y_n).
\end{align*}

Thus:
\begin{align*}
\alpha \nabla V_\omega(y_n) + \nabla V_\omega(y_{n+1}) &= 2\nabla V_\omega(y_n) + \alpha_1 \delta t \nabla V_\omega(y_n) + \alpha_2 \delta t^2 \nabla V_\omega(y_n) + \widetilde{\alpha}(\delta t) \nabla V_\omega(y_n) \\
&\quad + \delta t D^2 V_\omega(y_n)\left[\beta_1 y_n + M^{-1} x_n\right] \\
&\quad + \delta t^2 D^2 V_\omega(y_n)\left[\beta_2 y_n + \alpha_1 M^{-1} x_n - \frac{1}{2} M^{-1} \nabla V_\omega(y_n)\right] \\
&\quad + D^2 V_\omega(y_n) R_y(\omega, \delta t) + \int_0^1 (1-s) D^3 V_\omega(y_n + s\Delta y)[\Delta y, \Delta y] \, \dd s.
\end{align*}

Multiplying by $-\frac{\delta t}{2}$ and combining with $\alpha^2 x_n$ gives:
\begin{align*}
x_{n+1} &= \left[1 + 2\alpha_1 \delta t + (\alpha_1^2 + 2\alpha_2) \delta t^2 + R_{\alpha^2}(\delta t)\right] x_n \\
&\quad - \frac{\delta t}{2}\left(2\nabla V_\omega(y_n) + \alpha_1 \delta t \nabla V_\omega(y_n) + \alpha_2 \delta t^2 \nabla V_\omega(y_n) + \widetilde{\alpha}(\delta t) \nabla V_\omega(y_n)\right) \\
&\quad - \frac{\delta t}{2}\left[\delta t D^2 V_\omega(y_n)\left(\beta_1 y_n + M^{-1} x_n\right)\right] \\
&\quad - \frac{\delta t}{2}\left[\delta t^2 D^2 V_\omega(y_n)\left(\beta_2 y_n + \alpha_1 M^{-1} x_n - \frac{1}{2} M^{-1} \nabla V_\omega(y_n)\right)\right] \\
&\quad - \frac{\delta t}{2} D^2 V_\omega(y_n) R_y(\omega, \delta t) - \frac{\delta t}{2} \int_0^1 (1-s) D^3 V_\omega(y_n + s\Delta y)[\Delta y, \Delta y] \, \dd s.
\end{align*}

Collecting terms by powers of $\delta t$:
\begin{align*}
x_{n+1} &= x_n + 2\alpha_1 \delta t x_n + (\alpha_1^2 + 2\alpha_2) \delta t^2 x_n - \delta t \nabla V_\omega(y_n) \\
&\quad - \frac{\alpha_1}{2} \delta t^2 \nabla V_\omega(y_n) - \frac{\delta t^2}{2} D^2 V_\omega(y_n)\left(\beta_1 y_n + M^{-1} x_n\right) \\
&\quad + \underbrace{R_{\alpha^2}(\delta t) x_n - \frac{\alpha_2}{2} \delta t^3 \nabla V_\omega(y_n) - \frac{\delta t}{2} \widetilde{\alpha}(\delta t) \nabla V_\omega(y_n) - \frac{\delta t^3}{2} D^2 V_\omega(y_n)\left[\beta_2 y_n + \alpha_1 M^{-1} x_n - \frac{1}{2} M^{-1} \nabla V_\omega(y_n)\right]}_{\text{order }\delta t^3 \text{ and higher}} \\
&\quad \underbrace{- \frac{\delta t}{2} D^2 V_\omega(y_n) R_y(\omega, \delta t) - \frac{\delta t}{2} \int_0^1 (1-s) D^3 V_\omega(y_n + s\Delta y)[\Delta y, \Delta y] \, \dd s}_{\text{order }\delta t^4 \text{ and higher}}.
\end{align*}

Grouping terms:
\begin{align*}
x_{n+1} &= x_n + \delta t\left[2\alpha_1 x_n - \nabla V_\omega(y_n)\right] \\
&\quad + \delta t^2\left[(\alpha_1^2 + 2\alpha_2) x_n - \frac{\alpha_1}{2} \nabla V_\omega(y_n) - \frac{1}{2} D^2 V_\omega(y_n)\left(\beta_1 y_n + M^{-1} x_n\right)\right] + R_x(\omega, \delta t).
\end{align*}

All terms of order $\delta t^3$ and higher are collected into $R_x(\omega, \delta t)$. Using Theorem \ref{thm:moment_bounds} gives:
\[
\norm{D^2 V_\omega(y_n) R_y(\omega, \delta t)} \leq \norm{D^2 V_\omega(y_n)}_{\mathrm{op}} \norm{R_y(\omega, \delta t)} = O_\omega(\delta t^3),
\]
and
\[
\norm{\int_0^1 (1-s) D^3 V_\omega(y_n + s\Delta y)[\Delta y, \Delta y] \, \dd s} \leq \frac{1}{2} \sup_{s \in [0,1]} \norm{D^3 V_\omega(y_n + s\Delta y)}_{\mathrm{op}} \norm{\Delta y}^2 = O_\omega(\delta t^2).
\]
Thus the last term is $O_\omega(\delta t^3)$.

By Theorem \ref{thm:moment_bounds} and Assumption \ref{ass:bounded_iterates}, there exists $C_x(\omega)$ with $\E[C_x(\omega)^2] < \infty$ such that $\norm{R_x(\omega, \delta t)} \leq C_x(\omega) \delta t^3$.
\end{proof}

\subsection{Proof of Theorem \ref{thm:modified_ode}}
\label{app:proof_modified_ode}

\begin{proof}
We seek a modified ODE of the form:
\begin{align}
\frac{dY}{dt} &= F_\omega(Y, X, \delta t) = F_0(Y, X) + \delta t F_1(Y, X) + O_\omega(\delta t^2), \label{eq:F_ansatz} \\
\frac{dX}{dt} &= G_\omega(Y, X, \delta t) = G_0(Y, X) + \delta t G_1(Y, X) + O_\omega(\delta t^2). \label{eq:G_ansatz}
\end{align}

Let $\Phi_{\delta t}^\omega$ be the exact flow of this ODE. By Taylor expansion:
\begin{align}
Y(\delta t) &= Y + \delta t F_0 + \delta t^2 \left[F_1 + \frac{1}{2}\left(F_0 \cdot \nabla_Y F_0 + G_0 \cdot \nabla_X F_0\right)\right] + O_\omega(\delta t^3), \label{eq:Y_flow} \\
X(\delta t) &= X + \delta t G_0 + \delta t^2 \left[G_1 + \frac{1}{2}\left(F_0 \cdot \nabla_Y G_0 + G_0 \cdot \nabla_X G_0\right)\right] + O_\omega(\delta t^3). \label{eq:X_flow}
\end{align}

We require that $\Phi_{\delta t}^\omega(y_n, x_n)$ matches $(y_{n+1}, x_{n+1})$ up to $O_\omega(\delta t^3)$. Comparing with Lemma \ref{lem:pathwise_expansions}:

\subsubsection{Matching at order $\delta t$}
From coefficients of $\delta t$:
\begin{align}
F_0 &= \beta_1 Y + M^{-1} X, \label{eq:F0_match} \\
G_0 &= 2\alpha_1 X - \nabla V_\omega(Y). \label{eq:G0_match}
\end{align}

\subsubsection{Matching at order $\delta t^2$}
From coefficients of $\delta t^2$:
\begin{align}
F_1 + \frac{1}{2}\left(F_0 \cdot \nabla_Y F_0 + G_0 \cdot \nabla_X F_0\right) &= \beta_2 Y + \alpha_1 M^{-1} X - \frac{1}{2} M^{-1} \nabla V_\omega(Y), \label{eq:F1_match} \\
G_1 + \frac{1}{2}\left(F_0 \cdot \nabla_Y G_0 + G_0 \cdot \nabla_X G_0\right) &= (\alpha_1^2 + 2\alpha_2) X - \frac{\alpha_1}{2} \nabla V_\omega(Y) - \frac{1}{2} D^2 V_\omega(Y)\left(\beta_1 Y + M^{-1} X\right). \label{eq:G1_match}
\end{align}

Compute the derivatives:
\begin{align*}
\nabla_Y F_0 &= \beta_1 I_d, \quad \nabla_X F_0 = M^{-1}, \\
\nabla_Y G_0 &= -D^2 V_\omega(Y), \quad \nabla_X G_0 = 2\alpha_1 I_d.
\end{align*}

Then:
\begin{align*}
F_0 \cdot \nabla_Y F_0 &= (\beta_1 Y + M^{-1} X) \cdot (\beta_1 I_d) = \beta_1^2 Y + \beta_1 M^{-1} X, \\
G_0 \cdot \nabla_X F_0 &= (2\alpha_1 X - \nabla V_\omega(Y)) \cdot M^{-1} = 2\alpha_1 M^{-1} X - M^{-1} \nabla V_\omega(Y).
\end{align*}

Thus:
\begin{align*}
\frac{1}{2}\left(F_0 \cdot \nabla_Y F_0 + G_0 \cdot \nabla_X F_0\right) &= \frac{1}{2}\left(\beta_1^2 Y + \beta_1 M^{-1} X + 2\alpha_1 M^{-1} X - M^{-1} \nabla V_\omega(Y)\right) \\
&= \frac{\beta_1^2}{2} Y + \frac{\beta_1}{2} M^{-1} X + \alpha_1 M^{-1} X - \frac{1}{2} M^{-1} \nabla V_\omega(Y).
\end{align*}

From \eqref{eq:F1_match}:
\begin{align*}
F_1 &= \left[\beta_2 Y + \alpha_1 M^{-1} X - \frac{1}{2} M^{-1} \nabla V_\omega(Y)\right] - \left[\frac{\beta_1^2}{2} Y + \frac{\beta_1}{2} M^{-1} X + \alpha_1 M^{-1} X - \frac{1}{2} M^{-1} \nabla V_\omega(Y)\right] \\
&= \left(\beta_2 - \frac{\beta_1^2}{2}\right) Y - \frac{\beta_1}{2} M^{-1} X.
\end{align*}

Now for $G_1$:
\begin{align*}
F_0 \cdot \nabla_Y G_0 &= (\beta_1 Y + M^{-1} X) \cdot \left(-D^2 V_\omega(Y)\right) = -\beta_1 D^2 V_\omega(Y) Y - D^2 V_\omega(Y) M^{-1} X, \\
G_0 \cdot \nabla_X G_0 &= (2\alpha_1 X - \nabla V_\omega(Y)) \cdot (2\alpha_1 I_d) = 4\alpha_1^2 X - 2\alpha_1 \nabla V_\omega(Y).
\end{align*}

Thus:
\begin{align*}
\frac{1}{2}\left(F_0 \cdot \nabla_Y G_0 + G_0 \cdot \nabla_X G_0\right) &= \frac{1}{2}\left(-\beta_1 D^2 V_\omega(Y) Y - D^2 V_\omega(Y) M^{-1} X + 4\alpha_1^2 X - 2\alpha_1 \nabla V_\omega(Y)\right) \\
&= -\frac{\beta_1}{2} D^2 V_\omega(Y) Y - \frac{1}{2} D^2 V_\omega(Y) M^{-1} X + 2\alpha_1^2 X - \alpha_1 \nabla V_\omega(Y).
\end{align*}

From \eqref{eq:G1_match}:
\begin{align*}
G_1 &= \left[(\alpha_1^2 + 2\alpha_2) X - \frac{\alpha_1}{2} \nabla V_\omega(Y) - \frac{1}{2} D^2 V_\omega(Y)\left(\beta_1 Y + M^{-1} X\right)\right] \\
&\quad - \left[-\frac{\beta_1}{2} D^2 V_\omega(Y) Y - \frac{1}{2} D^2 V_\omega(Y) M^{-1} X + 2\alpha_1^2 X - \alpha_1 \nabla V_\omega(Y)\right] \\
&= (\alpha_1^2 + 2\alpha_2 - 2\alpha_1^2) X + \left(-\frac{\alpha_1}{2} + \alpha_1\right) \nabla V_\omega(Y) \\
&\quad + \left(-\frac{\beta_1}{2} + \frac{\beta_1}{2}\right) D^2 V_\omega(Y) Y + \left(-\frac{1}{2} + \frac{1}{2}\right) D^2 V_\omega(Y) M^{-1} X \\
&= (2\alpha_2 - \alpha_1^2) X + \frac{\alpha_1}{2} \nabla V_\omega(Y).
\end{align*}

Substituting into \eqref{eq:F_ansatz} and \eqref{eq:G_ansatz} gives \eqref{eq:mod_ode_y_final} and \eqref{eq:mod_ode_x_final}. The remainder terms $\rho_y, \rho_x$ account for:
\begin{enumerate}
\item Terms of order $\delta t^2$ and higher in the scheme expansions not captured by the matching.
\item Terms of order $\delta t^2$ and higher in the flow expansion.
\item The truncation of the modified ODE at $O_\omega(\delta t)$.
\end{enumerate}

All these are bounded by $C_\rho(\omega) \delta t^2$ with $\E[C_\rho(\omega)^2] < \infty$ by Theorem \ref{thm:moment_bounds} and Assumption \ref{ass:bounded_iterates}.
\end{proof}

\subsection{Proof of Lemma \ref{lem:exact_expansion}}
\label{app:proof_exact_expansion}

\begin{proof}
Let $(y(t), x(t))$ be the exact solution of \eqref{eq:original_hamiltonian_y}--\eqref{eq:original_hamiltonian_x} with initial conditions $(y_n, x_n)$ at $t = t_n$.

Compute derivatives using the Hamiltonian equations:
\begin{align*}
\dot{y} &= M^{-1} x, \\
\ddot{y} &= M^{-1} \dot{x} = -M^{-1} \nabla V_\omega(y), \\
\dddot{y} &= -M^{-1} D^2 V_\omega(y) \dot{y} = -M^{-1} D^2 V_\omega(y) M^{-1} x, \\
y^{(4)} &= -M^{-1} \left(D^3 V_\omega(y)[\dot{y}, M^{-1} x] + D^2 V_\omega(y) M^{-1} \dot{x}\right) \\
&= -M^{-1} \left(D^3 V_\omega(y)[M^{-1} x, M^{-1} x] - D^2 V_\omega(y) M^{-1} \nabla V_\omega(y)\right).
\end{align*}

Similarly for $x$:
\begin{align*}
\dot{x} &= -\nabla V_\omega(y), \\
\ddot{x} &= -D^2 V_\omega(y) \dot{y} = -D^2 V_\omega(y) M^{-1} x, \\
\dddot{x} &= -D^3 V_\omega(y)[\dot{y}, M^{-1} x] - D^2 V_\omega(y) M^{-1} \dot{x} \\
&= -D^3 V_\omega(y)[M^{-1} x, M^{-1} x] + D^2 V_\omega(y) M^{-1} \nabla V_\omega(y).
\end{align*}

Taylor expansion with integral remainder gives:
\begin{align*}
y(t_{n+1}) &= y_n + \delta t \dot{y}(t_n) + \frac{\delta t^2}{2} \ddot{y}(t_n) + \frac{\delta t^3}{6} \dddot{y}(t_n) + \frac{\delta t^4}{6} \int_0^1 (1-s)^3 y^{(4)}(t_n + s\delta t) \, \dd s, \\
x(t_{n+1}) &= x_n + \delta t \dot{x}(t_n) + \frac{\delta t^2}{2} \ddot{x}(t_n) + \frac{\delta t^3}{6} \dddot{x}(t_n) + \frac{\delta t^4}{6} \int_0^1 (1-s)^3 x^{(4)}(t_n + s\delta t) \, \dd s.
\end{align*}

Substituting the derivatives:
\begin{align*}
y(t_{n+1}) &= y_n + \delta t M^{-1} x_n - \frac{\delta t^2}{2} M^{-1} \nabla V_\omega(y_n) - \frac{\delta t^3}{6} M^{-1} D^2 V_\omega(y_n) M^{-1} x_n + R_y^{\text{exact}}(\omega, \delta t), \\
x(t_{n+1}) &= x_n - \delta t \nabla V_\omega(y_n) - \frac{\delta t^2}{2} D^2 V_\omega(y_n) M^{-1} x_n + R_x^{\text{exact}}(\omega, \delta t),
\end{align*}
where
\begin{align*}
R_y^{\text{exact}}(\omega, \delta t) &= \frac{\delta t^4}{6} \int_0^1 (1-s)^3 y^{(4)}(t_n + s\delta t) \, \dd s, \\
R_x^{\text{exact}}(\omega, \delta t) &= \frac{\delta t^3}{6} \left(-D^3 V_\omega(y_n)[M^{-1} x_n, M^{-1} x_n] + D^2 V_\omega(y_n) M^{-1} \nabla V_\omega(y_n)\right) \\
&\quad + \frac{\delta t^4}{6} \int_0^1 (1-s)^3 x^{(4)}(t_n + s\delta t) \, \dd s.
\end{align*}

Since $(y_n, x_n)$ and $(y(t), x(t))$ for $t \in [t_n, t_{n+1}]$ remain in the compact set $D$ (Assumption \ref{ass:bounded_iterates} and continuity of the exact solution), 
by Theorem \ref{thm:moment_bounds}, the suprema of derivatives of $V_\omega$ over $D$ have finite moments. Therefore:
\[
\norm{R_y^{\text{exact}}(\omega, \delta t)} \leq C_y^{\text{exact}}(\omega) \delta t^4, \quad \norm{R_x^{\text{exact}}(\omega, \delta t)} \leq C_x^{\text{exact}}(\omega) \delta t^3,
\]
with $\E[(C_y^{\text{exact}}(\omega))^2], \E[(C_x^{\text{exact}}(\omega))^2] < \infty$.
\end{proof}

\subsection{Proof of Theorem \ref{thm:ms_lte}}
\label{app:proof_ms_lte}

\begin{proof}
With $\alpha_1 = \beta_1 = 0$, Lemma \ref{lem:pathwise_expansions} gives:
\begin{align}
y_{n+1} &= y_n + \delta t M^{-1} x_n + \delta t^2 \left[\beta_2 y_n - \frac{1}{2} M^{-1} \nabla V_\omega(y_n)\right] + R_y(\omega, \delta t), \label{eq:y_scheme_simple} \\
x_{n+1} &= x_n - \delta t \nabla V_\omega(y_n) + \delta t^2 \left[2\alpha_2 x_n - \frac{1}{2} D^2 V_\omega(y_n) M^{-1} x_n\right] + R_x(\omega, \delta t). \label{eq:x_scheme_simple}
\end{align}

From Lemma \ref{lem:exact_expansion}:
\begin{align}
y(t_{n+1}) &= y_n + \delta t M^{-1} x_n - \frac{\delta t^2}{2} M^{-1} \nabla V_\omega(y_n) - \frac{\delta t^3}{6} M^{-1} D^2 V_\omega(y_n) M^{-1} x_n + R_y^{\text{exact}}(\omega, \delta t), \label{eq:y_exact_simple} \\
x(t_{n+1}) &= x_n - \delta t \nabla V_\omega(y_n) - \frac{\delta t^2}{2} D^2 V_\omega(y_n) M^{-1} x_n + R_x^{\text{exact}}(\omega, \delta t). \label{eq:x_exact_simple}
\end{align}

Subtracting \eqref{eq:y_scheme_simple} from \eqref{eq:y_exact_simple}:
\begin{align*}
\tau_n^y &= y(t_{n+1}) - y_{n+1} \\
&= \left[y_n + \delta t M^{-1} x_n - \frac{\delta t^2}{2} M^{-1} \nabla V_\omega(y_n) - \frac{\delta t^3}{6} M^{-1} D^2 V_\omega(y_n) M^{-1} x_n + R_y^{\text{exact}}(\omega, \delta t)\right] \\
&\quad - \left[y_n + \delta t M^{-1} x_n + \delta t^2 \left(\beta_2 y_n - \frac{1}{2} M^{-1} \nabla V_\omega(y_n)\right) + R_y(\omega, \delta t)\right] \\
&= -\delta t^2 \beta_2 y_n - \frac{\delta t^3}{6} M^{-1} D^2 V_\omega(y_n) M^{-1} x_n + \left(R_y^{\text{exact}}(\omega, \delta t) - R_y(\omega, \delta t)\right).
\end{align*}

Let $\tilde{R}_y(\omega, \delta t) = R_y^{\text{exact}}(\omega, \delta t) - R_y(\omega, \delta t)$. Then $\norm{\tilde{R}_y(\omega, \delta t)} \leq \tilde{C}_y(\omega) \delta t^3$ with $\E[\tilde{C}_y(\omega)^2] < \infty$.

Similarly for $x$:
\begin{align*}
\tau_n^x &= x(t_{n+1}) - x_{n+1} \\
&= \left[x_n - \delta t \nabla V_\omega(y_n) - \frac{\delta t^2}{2} D^2 V_\omega(y_n) M^{-1} x_n + R_x^{\text{exact}}(\omega, \delta t)\right] \\
&\quad - \left[x_n - \delta t \nabla V_\omega(y_n) + \delta t^2 \left(2\alpha_2 x_n - \frac{1}{2} D^2 V_\omega(y_n) M^{-1} x_n\right) + R_x(\omega, \delta t)\right] \\
&= -2\alpha_2 \delta t^2 x_n + \left(R_x^{\text{exact}}(\omega, \delta t) - R_x(\omega, \delta t)\right).
\end{align*}

Let $\tilde{R}_x(\omega, \delta t) = R_x^{\text{exact}}(\omega, \delta t) - R_x(\omega, \delta t)$. Then $\norm{\tilde{R}_x(\omega, \delta t)} \leq \tilde{C}_x(\omega) \delta t^3$ with $\E[\tilde{C}_x(\omega)^2] < \infty$.

Now bound $\E[\norm{\tau_n^y}^2]$:
\begin{align*}
\norm{\tau_n^y} &\leq \delta t^2 \abs{\beta_2} \norm{y_n} + \frac{\delta t^3}{6} \norm{M^{-1} D^2 V_\omega(y_n) M^{-1} x_n} + \norm{\tilde{R}_y(\omega, \delta t)}.
\end{align*}

Using $(a + b + c)^2 \leq 3(a^2 + b^2 + c^2)$:
\begin{align*}
\norm{\tau_n^y}^2 &\leq 3\delta t^4 \beta_2^2 \norm{y_n}^2 + \frac{\delta t^6}{12} \norm{M^{-1} D^2 V_\omega(y_n) M^{-1} x_n}^2 + 3\norm{\tilde{R}_y(\omega, \delta t)}^2.
\end{align*}

Take expectations. By Assumption \ref{ass:bounded_iterates}, $\norm{y_n} \leq R$ a.s., so $\E[\norm{y_n}^2] \leq R^2$.

For the second term:
\begin{align*}
\E\left[\norm{M^{-1} D^2 V_\omega(y_n) M^{-1} x_n}^2\right] 
&\leq \E\left[\norm{M^{-1}}_{\mathrm{op}}^4 \norm{D^2 V_\omega(y_n)}_{\mathrm{op}}^2 \norm{x_n}^2\right] \\
&\leq C_M^4 \E\left[\norm{D^2 V_\omega(y_n)}_{\mathrm{op}}^2 \norm{x_n}^2\right] \\
&\leq C_M^4 \E\left[\norm{D^2 V_\omega(y_n)}_{\mathrm{op}}^2\right] R^2 \quad \text{(since $\norm{x_n} \leq R$ a.s.)} \\
&\leq C_M^4 C_2(D) R^2 \quad \text{(by Theorem \ref{thm:moment_bounds})}.
\end{align*}

For the remainder term, $\E[\norm{\tilde{R}_y(\omega, \delta t)}^2] \leq K_y \delta t^6$ with $K_y = \E[\tilde{C}_y(\omega)^2]$.

Thus:
\begin{align*}
\E[\norm{\tau_n^y}^2] &\leq 3\beta_2^2 R^2 \delta t^4 + \frac{\delta t^6}{12} C_M^4 C_2(D) R^2 + 3K_y \delta t^6 \\
&= \left(3\beta_2^2 R^2\right) \delta t^4 + \left(\frac{C_M^4 C_2(D) R^2}{12} + 3K_y\right) \delta t^6.
\end{align*}

For $\delta t \leq 1$, $\delta t^6 \leq \delta t^4$, so there exists $C_y'$ such that:
\[
\E[\norm{\tau_n^y}^2] \leq C_y' \delta t^4.
\]

Similarly for $\tau_n^x$:
\begin{align*}
\norm{\tau_n^x} &\leq 2\abs{\alpha_2} \delta t^2 \norm{x_n} + \norm{\tilde{R}_x(\omega, \delta t)}.
\end{align*}

Squaring and using $(a + b)^2 \leq 2a^2 + 2b^2$:
\begin{align*}
\norm{\tau_n^x}^2 &\leq 6\alpha_2^2 \delta t^4 \norm{x_n}^2 + 3\norm{\tilde{R}_x(\omega, \delta t)}^2.
\end{align*}

Taking expectations:
\begin{align*}
\E[\norm{\tau_n^x}^2] &\leq 6\alpha_2^2 R^2 \delta t^4 + 3K_x \delta t^6 \quad \text{(where $K_x = \E[\tilde{C}_x(\omega)^2]$)} \\
&\leq C_x' \delta t^4 \quad \text{for $\delta t \leq 1$}.
\end{align*}

Finally, since $\norm{\tau_n}^2 = \norm{\tau_n^y}^2 + \norm{\tau_n^x}^2$:
\[
\E[\norm{\tau_n}^2] = \E[\norm{\tau_n^y}^2] + \E[\norm{\tau_n^x}^2] \leq (C_y' + C_x') \delta t^4 =: C_{\mathrm{LTE}} \delta t^4.
\]
\end{proof}

\subsection{Proof of Theorem \ref{thm:global_error}}
\label{app:proof_global_error}

\begin{proof}
Let $e_n = (y(t_n), x(t_n)) - (y_n, x_n)$, where $(y(t), x(t))$ is the exact solution of the original Hamiltonian ODE 
\eqref{eq:original_hamiltonian_y}--\eqref{eq:original_hamiltonian_x}, where $V$ is a Gaussian process satisfying Assumption \ref{ass:gp_regularity}.

From the local truncation error definition:
\[
(y(t_{n+1}), x(t_{n+1})) = \Psi^0_{\delta t}(y(t_n), x(t_n)) + \tau_n,
\]
where $\E[\norm{\tau_n}^2] \leq C_{\mathrm{LTE}} \delta t^4$ by Theorem \ref{thm:ms_lte}.

	The leapfrog scheme ($\alpha=\beta=1$) gives:
\[
(y_{n+1}, x_{n+1}) = \Psi^0_{\delta t}(y_n, x_n).
\]

Thus:
\[
e_{n+1} = \Psi^0_{\delta t}(y(t_n), x(t_n)) - \Psi^0_{\delta t}(y_n, x_n) + \tau_n.
\]

	Now, by Theorem Theorem VIII.3.1 of \cite{hairer2006geometric}, there exists $C>0$, such that 
	\begin{equation}
	\norm {\Psi^0_{\delta t}(y(t_n), x(t_n)) - \Psi^0_{\delta t}(y_n, x_n) }\leq (1+C\delta t^2),
		\label{eq:hairer}
	\end{equation}
from which it follows that:
\[
\norm{e_{n+1}} \leq (1 + C \delta t^2) \norm{e_n} + \norm{\tau_n}.
\]

Square both sides:
\begin{align*}
\norm{e_{n+1}}^2 &\leq (1 + C \delta t^2)^2 \norm{e_n}^2 + 2(1 + C \delta t^2) \norm{e_n} \norm{\tau_n} + \norm{\tau_n}^2.
\end{align*}

Take expectations. Let $u_n = \E[\norm{e_n}^2]$. Then:
\begin{align*}
u_{n+1} &\leq (1 + C \delta t^2)^2 u_n + 2(1 + C \delta t^2) \E[\norm{e_n} \norm{\tau_n}] + \E[\norm{\tau_n}^2].
\end{align*}

Using Cauchy-Schwarz: $\E[\norm{e_n} \norm{\tau_n}] \leq \sqrt{u_n \E[\norm{\tau_n}^2]}$.

By Young's inequality: $2ab \leq \delta t a^2 + \delta t^{-1} b^2$ for any $a, b \geq 0$. Apply with $a = \sqrt{u_n}$ and $b = \sqrt{\E[\norm{\tau_n}^2]}$:
\[
2\sqrt{u_n \E[\norm{\tau_n}^2]} \leq \delta t u_n + \delta t^{-1} \E[\norm{\tau_n}^2].
\]

Thus:
\begin{align*}
u_{n+1} &\leq (1 + C \delta t^2)^2 u_n + (1 + C \delta t^2) \left(\delta t u_n + \delta t^{-1} \E[\norm{\tau_n}^2]\right) + \E[\norm{\tau_n}^2] \\
&= \left[(1 + C \delta t^2)^2 + (1 + C \delta t^2) \delta t\right] u_n + \left[(1 + C \delta t^2) \delta t^{-1} + 1\right] \E[\norm{\tau_n}^2].
\end{align*}

Now $(1 + C \delta t^2)^2 = 1 + 2C \delta t^2 + C^2 \delta t^4 \leq 1 + 3C \delta t^2$ for small $\delta t$.
Also $(1 + C \delta t^2) \delta t \leq 2\delta t$ for small $\delta t$.

Thus:
\[
(1 + C \delta t^2)^2 + (1 + C \delta t^2) \delta t \leq 1 + 3C \delta t^2 + 2\delta t \leq 1 + K \delta t,
\]
where $K = 3C + 2$ (since $\delta t^2 \leq \delta t$ for $\delta t \leq 1$).

For the second term, using $\E[\norm{\tau_n}^2] \leq C_{\mathrm{LTE}} \delta t^4$:
\begin{align*}
\left[(1 + C \delta t^2) \delta t^{-1} + 1\right] \E[\norm{\tau_n}^2] 
&\leq \left[2 \delta t^{-1} + 1\right] C_{\mathrm{LTE}} \delta t^4 \quad \text{(for small $\delta t$)} \\
&= (2\delta t^3 + \delta t^4) C_{\mathrm{LTE}} \\
&\leq 3C_{\mathrm{LTE}} \delta t^3 \quad \text{(for $\delta t \leq 1$)}.
\end{align*}

Thus we have the recurrence:
\[
u_{n+1} \leq (1 + K \delta t) u_n + K' \delta t^3,
\]
where $K' = 3C_{\mathrm{LTE}}$.

Iterate with $u_0 = 0$:
\begin{align*}
u_n &\leq K' \delta t^3 \sum_{j=0}^{n-1} (1 + K \delta t)^j \\
&= K' \delta t^3 \frac{(1 + K \delta t)^n - 1}{K \delta t} \\
&= \frac{K'}{K} \delta t^2 \left[(1 + K \delta t)^n - 1\right].
\end{align*}

Since $n \delta t \leq T$, we have $n \leq T/\delta t$, and:
\[
(1 + K \delta t)^n \leq \left(1 + K \delta t\right)^{T/\delta t} \leq e^{K T}.
\]

Thus:
\[
u_n \leq \frac{K'}{K} \delta t^2 (e^{K T} - 1).
\]

Taking square roots:
\[
\sqrt{u_n} = \sqrt{\E[\norm{e_n}^2]} \leq \sqrt{\frac{K'}{K} (e^{K T} - 1)} \, \delta t =: C_T \delta t.
\]
%

In other words, we obtained:
\[
\sqrt{\E\left[\norm{(y(T), x(T)) - (y_{T/\delta t}, x_{T/\delta t})}^2\right]} \leq C_T \delta t,
\]
where $C_T = \sqrt{\frac{K'}{K} (e^{K T} - 1)}$ with $K = 3C + 2$ and $K' = 3C_{\mathrm{LTE}}$, with $C$ from (\ref{eq:hairer}) 
and $C_{\mathrm{LTE}}$ from Theorem \ref{thm:ms_lte}.

All assumptions are satisfied: Assumptions \ref{ass:gp_regularity}, \ref{ass:sample_paths} ensure the Gaussian process has regular sample paths. Theorem \ref{thm:moment_bounds} provides the necessary moment bounds. Assumption \ref{ass:mass_matrix} gives bounded $M^{-1}$. Assumption \ref{ass:param_expansions} with $\alpha_1 = \beta_1 = 0$ gives the parameter expansions for our modified scheme. Assumption \ref{ass:bounded_iterates} ensures the iterates remain in a compact set.

This completes the proof of mean-square convergence for our modified symplectic scheme introduced in \cite{main}.
\end{proof}

\end{appendix}

\bibliographystyle{natbib}
\bibliography{references4}

\end{document}